\newtheorem{theorem}{Theorem}
\newtheorem{corollary}[theorem]{Corollary}
\newtheorem{proposition}[theorem]{Proposition}
\newtheorem{definition}{Definition}
\newtheorem{lemma}[theorem]{Lemma}
\newtheorem*{theorem*}{Theorem}
\newtheorem*{proposition*}{Proposition}
\newtheorem*{definition*}{Definition}
\newtheorem*{lemma*}{Lemma}
\newtheorem*{claim*}{Claim}
\newtheorem*{corollary*}{Corollary}
\theoremstyle{remark}
\newtheorem{rem}[theorem]{Remark}
\newtheorem*{rem*}{Remark}
\newcommand{\wt}[1]{\widetilde{#1}}
\newcommand{\R}{\mathbb R}
\newcommand{\Z}{\mathbb Z}
\newcommand{\T}{\mathbb T}
\newcommand{\Hyp}{\mathbb H}
\newcommand{\M}{\widetilde{M}}
\newcommand\ada{A \wedge dA^{n-1}}
\newcommand\eps{\varepsilon}
\renewcommand{\S}{\mathbb{S}}
\DeclareMathOperator{\Ker}{Ker}
\DeclareMathOperator{\voleucl}{\mathrm{vol}_{\mathrm{Eucl}}}
\DeclareMathOperator{\vol}{\mathrm{vol}}
\title[Deformations of spectrum that preserve the length spectrum]{On deformations of the spectrum of a Finsler--Laplacian that preserve the length spectrum}
\author{Thomas Barthelm\'e}
\begin{document}
\begin{abstract}
The main result of this article is the construction of non-reversible Finsler metrics in negative curvature such that $4\lambda_1 > h^2$, where $\lambda_1$ is the bottom of the $L^2$-spectrum of a previously defined Finsler--Laplacian and $h$ the topological entropy of the flow. This gives a counter-example to a classical inequality in Riemannian geometry. 
We also show that the spectrum of that Finsler--Laplacian can detect changes in the Finsler metric that the marked length spectrum cannot.
\end{abstract}

 \maketitle

Finsler metrics have a long history of producing quite different results from what one might expect from Riemannian metrics. Among the general classes of Finsler metrics from which surprises can arise are non-reversible Finsler metrics. Non-reversible Finsler metrics are defined by considering norms which are not symmetric with respect to $0$, or in other words, such that their unit balls in each tangent space are convex sets that contain, but are not centered at, the origin.
 One of the most striking surprise that arose from non-reversible Finsler metrics was the construction by Katok \cite{Katok:KZ_metric} in 1973 of a metric on the sphere with only 2 periodic geodesics (these metrics are now called Katok-Ziller metrics as they have been thoroughly studied by Ziller in \cite{Ziller:GKE}). The Katok-Ziller metrics turned out to be Randers metric, i.e., metrics of the form $F= \sqrt{g} +\beta$ where $g$ is a Riemannian metric and $\beta$ is a one-form.

We are interested in this article in the Finsler--Laplacian spectrum of Randers metrics, or more generally Finsler metrics of the form $F= \bar F +\beta$ where $\bar{F}$ is a reversible Finsler metric and $\beta$ a one-form. The operator we consider is the Finsler--Laplacian introduced in \cite{moi:these,moi:natural_finsler_laplace}. Since this operator was thought of by Jean-Pierre Bourguignon and Patrick Foulon, who then suggested it to me, we will henceforth call this operator the BF-Laplacian (we recall its construction in Section \ref{sec:background} below). 

While considering this operator, we already had some surprising results in the non-reversible case:
In \cite{BarthelmeColbois}, Colbois and myself showed that, for any surface $S$ and any \emph{reversible} Finsler metric $\bar{F}$, there exists a \emph{uniform} constant $K$ (depending only on the topology of $S$) such that $\lambda_1(\bar{F}) \vol\left(S, \bar{F} \right) \leq K$. This result is just a generalization of a classical Riemannian result \cite{Korevaar:Upper_bounds}.
But we also proved that, for any $C>0$ and any surface $S$, there exists a Randers metric $F$ such that $\lambda_1(F) \vol\left(S, F \right) \geq C$. So, allowing a metric to be non-reversible can yield examples of metrics with a $\lambda_1$ much bigger than it should be.

We will construct here examples of non-reversible metrics that yield two more surprises. The first with respect to a presumed link between marked length spectrum and the spectrum of the Laplacian and the second with respect to the link between the bottom of the spectrum and the topological entropy of the geodesic flow.

The length spectrum of a metric is defined as the set of lengths of closed geodesics counted with multiplicity. Two manifolds are said to have the same \emph{marked} length spectrum if there is an isomorphism of their fundamental group such that corresponding free homotopy classes contain closed geodesics of the same length. The link between Laplacian spectrum and length, or marked length, spectrum has been intensively studied in Riemannian geometry. Generically, the length spectrum of a Riemannian manifold is determined by the Laplacian spectrum (Colin de Verdi\`ere \cite{CdV:spectre_et_longueurs}). In some specific cases, the notions of marked length spectrum and Laplacian spectrum are in fact equivalent in the sense that one determines the other and vice versa. Among the manifolds that verifies this are for instance flat tori (see for instance \cite{Gordon:when_you_can't_hear_the_shape}), manifolds of negative curvature (Otal \cite{Otal:spectre_marque} 
and Croke \cite{Croke:rigidity} since in that case equality of the marked length spectrum implies isometry), and some types of nilmanifolds (see \cite{Eberlein:2_step}, and it is in fact conjectured to be true for all nilmanifolds \cite{Gornet}).

Examples of Riemannian manifolds with the same length spectrum but not isospectral exists however, but they are quite exceptional. One such example is comparing two different Zoll surfaces (i.e., a metric on the sphere such that all its geodesics are closed and of length $2\pi$, see \cite{Guillemin}).

Non-reversible Finsler metrics give a very contrasted picture: for \emph{any} (reversible) metric on \emph{any} manifold, we can construct a non-reversible metric with the same marked length spectrum and different spectra:
\begin{theorem} \label{thm:same_length_spectrum_higher_spectrum}
 Let $\bar{F}$ be a Finsler metric on a manifold $M$, we denote by $\bar{F}^{\ast}$ the dual metric. Let $F = \bar{F} + \beta$, where $\beta$ is an \emph{exact} $1$-form on $M$, not identically zero, such that $\bar{F}^{\ast}(\beta)<1$. 
Then $\bar{F}$ and $F$ have the same marked length spectrum and the same volume, but, for each $k$, $\lambda_k(F)>\lambda_k(\bar{F})$.
\end{theorem}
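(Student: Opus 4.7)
My plan divides into three parts matching the three assertions.

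For the marked length spectrum, exactness of $\beta=df$ gives $\int_\gamma \beta=0$ for every closed curve $\gamma$, so
\[\ell_F(\gamma)=\int_\gamma \bar F(\dot\gamma)\,dt+\int_\gamma \beta=\ell_{\bar F}(\gamma).\]
Thus the length functional restricted to free loops is identical in the two metrics, the minimum length in each free homotopy class is unchanged, and the marked length spectra coincide.

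For the volume, I would use the Holmes--Thompson description of the Finsler volume entering the BF-Laplacian construction: $\vol(M,F)$ is, up to a universal constant, the integral over $M$ of the Euclidean volume of the dual unit ball $B^*_{F,x}\subset T^*_xM$. Under the hypothesis $\bar F^*(\beta)<1$, together with reversibility of $\bar F$, one proves the fiberwise translation identity
\[B^*_{F,x}=B^*_{\bar F,x}+\beta_x:\]
the inclusion $\supseteq$ follows immediately from $\xi(v)-\beta_x(v)\leq \bar F(v)\Rightarrow \xi(v)\leq F(v)$, and the converse uses reversibility of $\bar F$ to turn the one-sided bound into the two-sided bound $|\xi(v)-\beta_x(v)|\leq \bar F(v)$. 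Translation preserves Euclidean volume fiberwise, so the induced volume densities on $M$ agree pointwise; in particular $\vol(M,F)=\vol(M,\bar F)$ and $\|\cdot\|_{L^2(M,F)}=\|\cdot\|_{L^2(M,\bar F)}$.

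For the spectral comparison I would invoke the min-max principle
\[\lambda_k(F)=\inf_{V_k}\sup_{u\in V_k\setminus\{0\}}\frac{E_F(u)}{\|u\|^2_{L^2(M,F)}}.\]
Equality of $L^2$ norms reduces matters to comparing Dirichlet energies. The BF-Laplacian's energy density $e_F(u)(x)$ is a canonical average of $\langle du_x,\xi\rangle^2$ over $\xi$ in $B^*_{F,x}$. The translation identity, the change of variable $\xi=\eta+\beta_x$, and reversibility of $\bar F$ (which kills the linear cross-term $\int_{B^*_{\bar F,x}}\langle du_x,\eta\rangle\,d\eta$ via the symmetry $\eta\mapsto -\eta$) yield a pointwise identity
\[e_F(u)(x)-e_{\bar F}(u)(x)=c(x)\,\langle du_x,\beta_x\rangle^2,\qquad c(x)>0.\]
Integrating gives $E_F(u)\geq E_{\bar F}(u)$, and min-max yields $\lambda_k(F)\geq\lambda_k(\bar F)$.

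The main obstacle is upgrading to strict inequality for every $k$. Equality $\lambda_k(F)=\lambda_k(\bar F)$ would force, via min-max, a $k$-dimensional subspace $V$ of $\bar F$-eigenfunctions for eigenvalue $\lambda_k(\bar F)$ on which the first-order operator $u\mapsto \langle du,\beta\rangle=\langle du,df\rangle$ vanishes identically, i.e., each $u\in V$ is constant along level sets of $f$. To rule this out I would either apply unique continuation to the overdetermined system $\{\Delta_{\bar F}u=-\lambda_k(\bar F)u,\ \langle du,df\rangle\equiv 0\}$, or study the real-analytic one-parameter family $F_s=\bar F+s\beta$, $s\in[0,1]$ (admissible by $\bar F^*(\beta)<1$), whose eigenvalues are non-decreasing in $s^2$ by the above comparison; analyticity of eigenvalue branches together with lower-order terms of the BF-Laplacian distinguishing $F_s$ from $\bar F$ would then force strict monotonicity.
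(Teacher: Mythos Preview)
Your treatment of the marked length spectrum is correct and matches the paper. Your volume argument via the fiberwise translation $B^*_{F,x}=B^*_{\bar F,x}+\beta_x$ is also correct and is actually cleaner than the paper's route, which derives $\Omega=\bar\Omega$ from a contact-geometry computation of $A\wedge dA^{n-1}$ in terms of $\bar A\wedge d\bar A^{n-1}$.

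The energy comparison, however, rests on a wrong model of the BF energy density. The symbol of $\Delta^F$ is \emph{not} an average of $\langle du,\xi\rangle^2$ over the dual ball $B^*_{F,x}$; it is an average of $(du_x(v))^2$ over the \emph{unit tangent sphere} $S^F_xM$ against the angle measure $\alpha^F$. These are genuinely different objects, and the unit tangent sphere of $\bar F+\beta$ is not a translate of that of $\bar F$. Consequently your change of variables $\xi=\eta+\beta_x$ does not apply, and the identity
\[
e_F(u)(x)-e_{\bar F}(u)(x)=c(x)\,\langle du_x,\beta_x\rangle^2
\]
is false. The paper's actual computation (working on the tangent side and using the flip symmetry of $\bar F$) gives
\[
\lVert d_xf\rVert^2_{\sigma^F}=\frac{2n}{\voleucl(\S^{n-1})}\int_{H^+_xM}\frac{(L_{\bar X}\pi^*f)^2}{1-(\pi^*\beta(\bar X))^2}\,\bar\alpha,
\]
whose excess over $\lVert d_xf\rVert^2_{\bar\sigma}$ involves all the moments $\int (d_xf(v))^2(\beta_x(v))^{2k}\bar\alpha$, not just the single pairing $\langle du_x,\beta_x\rangle$. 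In particular the excess is strictly positive whenever \emph{both} $d_xf\neq 0$ and $\beta_x\neq 0$, a much stronger equality condition than your $\langle du_x,\beta_x\rangle=0$.

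This matters for strictness. With the correct equality case, the paper finishes in one line: a nonconstant eigenfunction has $df\neq 0$ on a set of full measure (Donnelly--Fefferman / Lin), which necessarily meets the support of $\beta$, so the energy inequality is strict on the min-max subspace. Your proposed routes---ruling out eigenfunctions constant along level sets of $f$, or analytic perturbation in $s$---are aimed at the wrong obstruction and are left as sketches; neither is needed once the symbol computation is done correctly.
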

Note that the condition on the norm of $\beta$ is only there to insure that the metric $F$ is still a Finsler metric.

Saying that this result is really surprising might be a bit of a stretch. Indeed, there exist infinite-dimensional families of Finsler metrics that share the same marked length spectrum, so finding some metrics with different spectra should not be too hard. But on the other hand, infinitely many Finsler metrics should also share the same BF-Laplacian (see \cite{moi:these,moi:natural_finsler_laplace}), which makes the existence of the above examples not completely obvious.

Moreover, the main interest of this result is what it suggests about the BF-Laplacian: this type of transformation of a reversible metric by an exact form does not change the metric, or the geodesic flow a lot. Indeed, the new geodesic flow is a time change of the old that do not change the length of any closed geodesic. In fact such a time-change is a trivial time change in the terminology of \cite{KatokHassel}, i.e., it is a time change such that the two flows are smoothly conjugate (and this is all due to the fact that $\beta$ is taken to be exact, see Lemma \ref{lem:canonical_time_change}).
So the length spectrum is not subtle enough to pick up this change, nor is the dynamics of the geodesic flow. But what the above result shows is that the BF-Laplacian do detect such variations, which could make it a more powerful tool in some situations.

If we work a bit more, we can obtain some even more surprising examples:
\begin{theorem}\label{thm:same_length_exploding_lambda_1}
 Let $g_0$ be the flat metric on the $2$-torus $\R^2 / \Z^2$. Let $F_{\eps,t}= \sqrt{g_0} +tdh_{\eps}$ be a Randers metric, where $h_{\eps}$ is a well chosen function such that, almost everywhere, $\nabla h_{\eps}$ tends to a unit vector of irrational slope.
Then, for all $\eps,t$, $(\mathbb{T}^2, F_{\eps,t})$ have the same volume, the same geodesic flow up to a (trivial) time-change and the same marked length spectrum as $(\mathbb{T}^2, g_0)$, but
\begin{equation*}
 \lim_{(\eps, t) \rightarrow (0,1)} \lambda_1(F_{\eps,t}) = +\infty.
\end{equation*}
\end{theorem}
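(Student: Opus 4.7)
\emph{Construction of $h_\varepsilon$.} Fix a unit vector $V \in \R^2$ of irrational slope. I would take $h_\varepsilon \in C^\infty(\T^2)$ to be a smooth periodic approximation of a sawtooth in direction $V$: $\nabla h_\varepsilon \equiv V$ outside a ``compensation set'' $A_\varepsilon \subset \T^2$ of Lebesgue measure tending to $0$ as $\varepsilon \to 0$, across which $h_\varepsilon$ drops sharply in order to restore periodicity. The irrational slope of $V$ makes this construction flexible, since the orbits of the constant vector field $V$ foliate $\T^2$ densely and $A_\varepsilon$ can be routed as a thin transverse strip.

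\emph{The easy claims.} For any $t<1$ with $\varepsilon$ small, the one-form $t\, dh_\varepsilon$ is exact with $\bar F^*$-norm strictly less than $1$. Theorem~\ref{thm:same_length_spectrum_higher_spectrum} then gives equality of the volumes and of the marked length spectra of $F_{\varepsilon,t}$ and $g_0$, and Lemma~\ref{lem:canonical_time_change} gives the trivial time change of the geodesic flow.

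\emph{Spectral blow-up.} Plugging $F_{\varepsilon,t}$ into the formula for the BF-Laplacian and its natural volume from \cite{moi:natural_finsler_laplace} yields, on the flat torus, an explicit second-order elliptic operator with drift together with a density $\rho_{\varepsilon,t}$ of unit total mass. On the large set $\T^2 \setminus A_\varepsilon$, where $\nabla h_\varepsilon = V$, the unit $F$-sphere is the curve $r(\theta) = 1/(1 + t\cos\theta)$ (with $\theta$ the angle to $V$), which degenerates in direction $-V$ as $t\to 1$; consequently both the coefficients of the BF-Laplacian and the density $\rho_{\varepsilon,t}$ degenerate in a controlled way. To lower-bound $\lambda_1$, I would pass to the symmetric part of the operator and estimate its Rayleigh quotient. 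The key leverage is that any function of moderate Rayleigh quotient must be nearly invariant under $V$ on $\T^2 \setminus A_\varepsilon$, and the irrationality of $V$'s slope then forces it to be nearly constant there; the remaining variation must therefore be concentrated on the thin strip $A_\varepsilon$, where the Rayleigh cost per unit of $L^2$-mass grows as $|A_\varepsilon|^{-1}$, forcing the Rayleigh quotient to diverge as $(\varepsilon, t) \to (0, 1)$.

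\emph{Main obstacle.} The BF-Laplacian is not self-adjoint when $\beta \neq 0$, so the Courant--Fischer characterization of $\lambda_1$ needs adaptation (for instance through the numerical range or the symmetric part of the operator). The genuinely technical step, however, is quantifying the heuristic above: matching the rate at which $|A_\varepsilon|$ shrinks with the rate $t \to 1$ and with the behavior of $\rho_{\varepsilon,t}$ near $A_\varepsilon$, and converting the qualitative ``$V$-invariance forces constancy'' statement into a quantitative Poincar\'e-type inequality with the right scaling.
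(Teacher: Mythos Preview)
Your construction of $h_\varepsilon$ does not work. If $\nabla h_\varepsilon \equiv V$ on $\T^2\setminus A_\varepsilon$ with $|A_\varepsilon|\to 0$, then since $\int_{\T^2}\nabla h_\varepsilon = 0$ we get $\int_{A_\varepsilon}\nabla h_\varepsilon = -(1-|A_\varepsilon|)V$, so $\sup_{A_\varepsilon}|\nabla h_\varepsilon|\geq (1-|A_\varepsilon|)/|A_\varepsilon|\to\infty$. Thus $\bar F^*(dh_\varepsilon)$ is not bounded by $1$ and $F_{\varepsilon,t}$ is not a Finsler metric. Your phrase ``drops sharply to restore periodicity'' is exactly the problem: the Randers condition forbids sharp drops. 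The paper's construction avoids this by taking $h_\varepsilon(x,y)=\cos\rho\, f_\varepsilon(x)+\sin\rho\, f_\varepsilon(y)$ with $f_\varepsilon$ a smoothed triangle wave, so that $\nabla h_\varepsilon$ equals $\pm V_\rho$ (both signs, on four regions) outside thin strips, and $|\nabla h_\varepsilon|\leq 1$ everywhere. The cancellation in $\int\nabla h_\varepsilon=0$ comes from the sign changes, not from a large gradient on a small set.

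Two further misconceptions make your outline harder than it needs to be. First, the BF-Laplacian \emph{is} symmetric with respect to the Holmes--Thompson volume $\Omega^F$ (this is in Section~\ref{sec:background}), so the Min--Max characterization of $\lambda_1$ applies directly and no ``adaptation through the numerical range'' is needed. Second, by Lemma~\ref{lem:Finsler-Randers} the volume $\Omega^{F_{\varepsilon,t}}$ equals the flat volume $\bar\Omega$ for all $\varepsilon,t$; there is no degenerating density $\rho_{\varepsilon,t}$. What actually degenerates is the \emph{symbol}: Lemma~\ref{lem:symbol_increase_precise_statement} gives
\[
\lVert d_xf\rVert_{\sigma_{t,\varepsilon}}^2=\frac{2}{\pi}\int_{\bar S^+_x\T^2}\frac{(d_xf(v))^2}{1-t^2(d_xh_\varepsilon(v))^2}\,\bar\alpha,
\]
which blows up as $t\to 1$ at any point where $d_xf(\nabla h_\varepsilon)\neq 0$ and $|\nabla h_\varepsilon|=1$. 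Your heuristic ``$V$-invariance plus irrationality forces constancy'' is then exactly the paper's argument, but applied qualitatively: for any fixed non-constant $f$, some $\varepsilon$ and some $x$ in the good region witness $d_xf(V_\rho)\neq 0$, and the Rayleigh quotient diverges. No quantitative Poincar\'e inequality is invoked.
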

The family of functions $h_{\eps}$ are given explicitly in Section \ref{sec:fixed_length_exploding_lambda}.
This result is an improvement on the example on the torus constructed in \cite{BarthelmeColbois}. First of all because this new example preserves the marked length spectrum. But also, in \cite{BarthelmeColbois}, we had to modify the Riemannian part of our Randers metric in order to build a big eigenvalue, whereas this example shows that the Riemannian part can be fixed. It seems very likely that one could build ad hoc examples of a family of Randers metric with a first eigenvalue tending to infinity on any manifold and with any fixed Riemannian part. However the construction and the proof is much easier in the torus case and I did not investigate more the general case.

We will now see what the same type of construction can yield in terms of the relation between the bottom of the $L^2$-spectrum and the topological entropy of the geodesic flow. In the following, $M$ is a closed manifold equipped with a Finsler metric $F$, $\wt M$ is the universal cover of $M$ and $\wt F$ the lifted metric. We denote by $\wt \Delta$ the BF-Laplacian of $\wt F$ and call $\lambda_1(\wt F)$ the bottom of the $L^2$-spectrum of $\wt \Delta$ (see Section \ref{sec:negatively_curved} for more details).

A classical result in Riemannian geometry is the inequality $4\lambda_1(\wt g) \leq h(g)^2$, where $h(g)$ is the topological entropy of the geodesic flow. Moreover, a very interesting rigidity phenomenon takes place for Riemannian metrics: if $4\lambda_1(\wt g) = h(g)^2$, then $(M,g)$ is a Riemannian symmetric space (see \cite{Katok:4_appl,Ledrappier:Harm_measures} for the surface case and \cite{BessonCourtoisGallot} in higher dimension). For quite some time, I have been hoping to prove that the inequality $4\lambda_1(\wt F) \leq h^2$ still holds for the BF-Laplacian.

 In the Finsler setting, it is very easy to show that $4\lambda_1(\wt F) \leq n h(F)^2$, where $n$ is the dimension of $M$ (see Proposition \ref{prop:weak_inequality}). In \cite{BCCV}, we also proved that the sharper inequality $4\lambda_1(\wt F) \leq h(F)^2$ does hold in some Finsler cases. However, it turns out, to my surprise, that the sharp inequality does not hold in general:
\begin{theorem} \label{thm:lambda_1_bigger_h}
 There exist examples of negatively curved, non-reversible Finsler metrics such that $4\lambda_1(\wt F) > h(F)^2$. More precisely, let $\sqrt{g}$ be an hyperbolic metric on a manifold $M$. Let $\beta$ be an \emph{exact} $1$-form on $M$ such that $\lVert \beta \rVert_{g^{\ast}} <1$. Set $F = \sqrt{g} + \beta$. If $\beta$ have only isolated zeros, then,
\[
 4\lambda_1(\wt F) > h(F)^2 = (n-1)^2.
\]
\end{theorem}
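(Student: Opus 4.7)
The plan is to reduce the theorem to the strict inequality $\lambda_1(\wt F) > \lambda_1(\wt{\sqrt g})$ and then to adapt, to the bottom of the $L^2$-spectrum on the universal cover, the Rayleigh-quotient mechanism underlying Theorem~\ref{thm:same_length_spectrum_higher_spectrum}.

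First I would compute the topological entropy. Since $\beta$ is exact with $\lVert \beta \rVert_{g^\ast} < 1$, Theorem~\ref{thm:same_length_spectrum_higher_spectrum} gives that $\sqrt g$ and $F$ share the same marked length spectrum on $M$, and by Lemma~\ref{lem:canonical_time_change} the two geodesic flows are related by a \emph{trivial} time change, i.e.\ they are smoothly conjugate. Topological entropy is a smooth conjugacy invariant, so $h(F) = h(\sqrt g) = n-1$, the standard value for a closed hyperbolic $n$-manifold. On $\wt M = \Hyp^n$ the BF-Laplacian of $\sqrt g$ coincides with the Laplace--Beltrami operator, whose $L^2$-spectral bottom is the classical $(n-1)^2/4$, so $4\lambda_1(\wt{\sqrt g}) = (n-1)^2 = h(F)^2$. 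The theorem therefore reduces to proving $\lambda_1(\wt F) > \lambda_1(\wt{\sqrt g})$.

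To obtain this strict comparison, I would write $\beta = df$ on $M$, lift $f$ to $\wt f$ on $\wt M$ so that $d\wt f$ is bounded and $\pi_1(M)$-invariant, and use the variational characterization of $\lambda_1(\wt F)$ as the infimum over $u \in C^\infty_c(\wt M)$ of a Rayleigh quotient $R_F(u)$ built from the BF-Laplacian. The pointwise comparison driving the proof of Theorem~\ref{thm:same_length_spectrum_higher_spectrum} in the compact setting is local, hence should carry over verbatim to the universal cover and yield, for every such $u$, an inequality of the shape
\[
R_F(u) \geq R_{\sqrt g}(u) + P(u),
\]
with a nonnegative correction $P(u)$ that vanishes only where $d\wt f$ or $du$ does.

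The main obstacle is then upgrading this pointwise improvement into a strict inequality at the level of infima: on the non-compact cover $\wt M$, $\lambda_1(\wt{\sqrt g})$ is not attained, and one must rule out that a minimizing sequence for $R_{\sqrt g}$ simultaneously drives $P(u_n)$ to zero. The hypothesis that $\beta$ has only isolated zeros enters here decisively: the zero locus of $d\wt f$ is discrete and $\pi_1(M)$-invariant, hence uniformly separated in $\wt M$, so $\lvert d\wt f\rvert^2_{g^\ast}$ is bounded below by a positive constant outside any fixed tubular neighborhood of it. Coupled with a Faber--Krahn-type estimate forcing any near-minimizer of $R_{\sqrt g}$ on $\Hyp^n$ to have support of diameter bounded below, together with the $\pi_1(M)$-periodicity of $d\wt f$, this prevents $P(u_n)$ from collapsing to zero along any minimizing sequence, and yields $\lambda_1(\wt F) > \lambda_1(\wt{\sqrt g})$, completing the proof.
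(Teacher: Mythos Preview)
Your outline is correct and follows the paper's proof almost step for step: reduce to $\lambda_1(\wt F) > \lambda_1(\wt g)$ via the entropy identification, use the pointwise symbol comparison (the paper makes this explicit as $\lVert d_xf\rVert_{\sigma^F}^2 \geq \lVert d_xf\rVert_{g^*}^2\bigl(1 + \tfrac{n}{8}\lVert d_xh\rVert_{g^*}^2\bigr)$), and then run a contradiction on a minimizing sequence exploiting that the zeros of $dh$ are isolated and $\pi_1(M)$-periodic.

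One caution on the last step: ``support of diameter bounded below'' is not the invariant that does the work, since the union of the small balls around the discrete zero set already has infinite diameter, so a sequence could sit inside that union and still have arbitrarily large diameter. What the paper actually does is show that if the correction $P(u_n)\to 0$ then $\int_{\wt M\smallsetminus \cup B_k}\lVert du_n\rVert_{g^*}^2\to 0$, truncate $u_n$ to functions $u_n'$ supported in the slightly larger balls $B_k'$ with $|R^g(u_n)-R^g(u_n')|$ small, and then apply Cheeger's inequality \emph{on each ball separately} (the balls are isometric, so the Cheeger constant is uniform and of order $1/\eps$) to force $R^g(u_n')\gg (n-1)^2$. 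So your Faber--Krahn intuition is right, but it must be applied componentwise to the disjoint union of small balls rather than via a diameter bound.
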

Once more, the condition $\lVert \beta \rVert_{g^{\ast}} <1$ is only there to ensure that the metric $F$ is Finsler. It also seems reasonable to expect that the condition $\beta$ have isolated zeros can be removed,
 but one would need to do more than a trivial modification of the proof we give.

The construction of Theorem \ref{thm:same_length_exploding_lambda_1} relies on the following fact: If $\bar F$ is a \emph{reversible} Finsler metric and $\beta$ a $1$-form, then the spectrum of $F =\bar F + \beta$ is strictly greater than the spectrum of $\bar F$. In fact, if two of their eigenvalues are equal, then $\beta$ has to be zero.
\begin{proposition} \label{prop:spectrum_of_Randers_Finsler}  
Let $\bar F$ be a reversible Finsler metric on a \emph{closed} manifold $M$, and let $\beta$ be a, non-identically zero, $1$-form on $M$ such that $F^*(\beta) <1$. Let $F= \bar F +\beta$. Then $\vol(M,F)=\vol(M,\bar F)$, and, for all $k\geq 1$,
\[
 \lambda_k(F) > \lambda_k(\bar F).
\]
\end{proposition}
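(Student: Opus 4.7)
The plan is to use the variational characterization
\[
\lambda_k(G) = \inf_{V_k} \sup_{f \in V_k \setminus \{0\}} \frac{E_G(f)}{\int_M f^2 \, dm_G},
\]
where the infimum ranges over $(k+1)$-dimensional subspaces of $C^\infty(M)$, $dm_G$ denotes the BF volume, and $E_G$ is the Dirichlet form of the BF-Laplacian. As recalled in Section~\ref{sec:background}, $E_G(f)$ admits (up to a normalizing constant) the expression $\int_{SM} (X_G \cdot \pi^* f)^2 \, A_G \wedge (dA_G)^{n-1}$, where $A_G$ is the Hilbert form and $X_G$ the geodesic spray on the unit tangent bundle. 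The proposition therefore reduces to establishing (a) the equality of volume forms $dm_F = dm_{\bar F}$, and (b) a strict pointwise inequality of Dirichlet integrands $E_F(f) > E_{\bar F}(f)$ whenever $df$ is not identically zero.

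Both assertions will rest on the simple identity $A_F = A_{\bar F} + \pi^*\beta$ as $1$-forms on $TM \setminus 0$, which follows from the Legendre-transform definition of the Hilbert form together with the linearity of $\beta$ along fibers. I would expand
\[
A_F \wedge (dA_F)^{n-1} = (A_{\bar F} + \pi^*\beta) \wedge (dA_{\bar F} + \pi^* d\beta)^{n-1}
\]
as a multinomial in $\pi^*\beta$ and $\pi^*d\beta$, and change variables from the unit tangent bundle of $F$ back to that of $\bar F$ via the radial reparametrization $w \mapsto w/(1+\beta(w))$ (well-defined because $\bar F^*(\beta) < 1$), reducing everything to fiber integrals over $S_p\bar F$ that can be analyzed term by term.

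The crux of the argument is the reversibility of $\bar F$: the fiber involution $\sigma(v) = -v$ preserves $S_p\bar F$ and satisfies $\sigma^* A_{\bar F} = -A_{\bar F}$, whereas $\pi^*\beta$ is $\sigma$-antisymmetric on vertical directions. Every term in the expansion containing an odd number of factors of $\pi^*\beta$ is therefore antisymmetric under $\sigma$ and integrates to zero along each fiber. For~(a) this will force $dm_F$ to reduce to $dm_{\bar F}$ once the even-in-$\beta$ higher-order contributions are tracked and shown to cancel. For~(b) the same expansion isolates the leading correction to $E_F(f) - E_{\bar F}(f)$ as a manifestly non-negative quadratic form in $(\beta, df)$, essentially of the shape $\int_{S_p\bar F} \beta(v)^2 (d_p f(v))^2 \, d\mu_p$, which is strictly positive at any point where both $\beta_p$ and $d_p f$ are non-zero.

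The conclusion $\lambda_k(F) \geq \lambda_k(\bar F)$ will follow immediately from (a), (b), and min-max. Strict inequality will come from testing the Rayleigh quotient of $F$ against an eigenfunction $f_k$ of $\bar F$ with eigenvalue $\lambda_k(\bar F)$ and invoking unique continuation for the BF-Laplacian to rule out $df_k \equiv 0$ on the open set $\{\beta \neq 0\}$. The hard part will be (b): the Jacobian factor $1/(1+\beta(w))$ introduced by the radial reparametrization produces cross terms which must be controlled, for instance by Cauchy--Schwarz, in order to confirm that the leading positive correction genuinely dominates. The volume identity~(a) should be comparatively straightforward once the fiber-parity argument is in place.
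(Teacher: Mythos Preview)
Your overall plan---min-max plus equality of volumes plus pointwise domination of the Dirichlet integrands---is exactly the paper's strategy. But there is a genuine gap in your strict-inequality step: testing the $F$-Rayleigh quotient against a $\bar F$-eigenfunction $f_k$ only tells you $R^F(f_k) > \lambda_k(\bar F)$, and inserting a particular subspace into the min-max for $F$ produces an \emph{upper} bound on $\lambda_k(F)$, not a lower bound. You cannot conclude $\lambda_k(F) > \lambda_k(\bar F)$ from this. The paper reverses the roles: it lets $V$ be the span of the constant function together with the first $k$ eigenfunctions \emph{of $F$}, so that $\lambda_k(F) = \max_{V} R^F$ exactly; then strictness of $R^F > R^{\bar F}$ on every nonconstant element of $V$ (here is where unique continuation enters, applied to combinations of $F$-eigenfunctions) yields $\lambda_k(F) = \max_V R^F > \max_V R^{\bar F} \geq \lambda_k(\bar F)$.

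On the computational side, the paper also sidesteps your multinomial expansion and the Cauchy--Schwarz control of cross terms. A vertical-rank count shows directly that $A\wedge dA^{n-1} = (1+\pi^*\beta(\bar X))\,\bar A\wedge d\bar A^{n-1}$: every term containing a factor $\pi^*d\beta$ accepts at most $n-2$ vertical vectors and therefore vanishes as a top form on $HM$. This gives $\alpha = (1+\pi^*\beta(\bar X))\bar\alpha$ and $\Omega = \bar\Omega$ after a single parity integral, so there are no ``even-in-$\beta$ higher-order contributions'' to track. For the symbol one then has $(L_X\pi^*f)^2\,\alpha = \dfrac{(L_{\bar X}\pi^*f)^2}{1+\pi^*\beta(\bar X)}\,\bar\alpha$; splitting $H_xM$ into the halves $\{\pm\pi^*\beta(\bar X)\geq 0\}$ and pairing them under the flip $s$ collapses $\frac{1}{1+\beta}+\frac{1}{1-\beta}$ to $\frac{2}{1-\beta^2}$ and gives the \emph{exact} identity
\[
\lVert d_xf\rVert_\sigma^2 \;=\; \frac{2n}{\voleucl(\S^{n-1})}\int_{H_x^+M}\frac{(L_{\bar X}\pi^*f)^2}{1-(\pi^*\beta(\bar X))^2}\,\bar\alpha \;\geq\; \lVert d_xf\rVert_{\bar\sigma}^2,
\]
with equality iff $\beta_x=0$ or $d_xf=0$. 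No estimates are needed; your anticipated ``hard part'' disappears.
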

This result was proven for the first eigenvalue $\lambda_1(F)$ and when the metric $F$ is Randers by He and Zheng \cite{HeZheng}. We provide here a general and coordinate-free proof.

Obtaining a strict inequality in Theorem \ref{thm:lambda_1_bigger_h} is however much more involved than in Proposition \ref{prop:spectrum_of_Randers_Finsler}. This is due to the fact that $\lambda_1(\wt F)$ is in general not an eigenvalue, but just the bottom of the spectrum. We can nevertheless prove the following
\begin{proposition} \label{prop:lambda_1_negatively_curved}
 Let $\bar F$ be a reversible Finsler metric on a closed manifold $M$. Let $\beta \colon M \rightarrow T^* M$ be a $1$-form on $M$ such that $\bar F^{\ast} (\beta) <1$, and $F = \bar F + \beta$. Let $\wt{\bar F}$ and $\wt F$ be the lifts to the universal cover of $M$, then
\[
 \lambda_1(\wt F) \geq \lambda_1 (\wt{\bar F}).
\]

Moreover, if $\bar F = \sqrt{g}$ is a Riemannian metric of negative curvature and $\beta$ is a $1$-form with isolated zeros, then 
\[
 \lambda_1(\wt F) > \lambda_1 (\wt{g}).
\]
\end{proposition}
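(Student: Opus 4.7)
The plan for the weak inequality is to use the variational characterization of $\lambda_1(\wt F)$ as the infimum, over $f \in C_c^\infty(\wt M)\setminus\{0\}$, of the Rayleigh quotient $R_{\wt F}(f)$ of the BF-Laplacian, together with the pointwise comparison of Rayleigh quotients that underlies Proposition \ref{prop:spectrum_of_Randers_Finsler}. Because $d\vol_F = d\vol_{\bar F}$ and the Dirichlet energy for $F$ dominates the one for $\bar F$ on every test function, taking infimum over $C_c^\infty(\wt M)$ gives $\lambda_1(\wt F) \geq \lambda_1(\wt{\bar F})$ directly.

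For the strict inequality when $\bar F = \sqrt{g}$ is a negatively curved Riemannian metric and $\beta$ has isolated zeros, I would argue by contradiction. Assume $\lambda := \lambda_1(\wt F) = \lambda_1(\wt g)$, and choose a normalized minimizing sequence $f_n \in C_c^\infty(\wt M)$ with $\int f_n^2 \, d\vol_g = 1$ and $R_{\wt F}(f_n) \to \lambda$. By the pointwise comparison we also have $R_{\wt g}(f_n) \to \lambda$, so the defect
\[
D_n = \int_{\wt M}\bigl(\|df_n\|_{\wt F^*}^2 - \|df_n\|_{\wt g^*}^2\bigr)\, d\vol_g
\]
is non-negative and tends to $0$. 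A direct calculation shows that at each point $x$ the integrand is bounded below by a positive multiple of $\|\beta(x)\|_{g^*}^2 \,|df_n(x)|_g^2$. Consequently $\int_{\wt M\setminus U_r} |df_n|_g^2 \, d\vol_g \to 0$ for every fixed $r>0$, where $U_r$ denotes the $r$-neighborhood in $(\wt M, g)$ of the lift $\pi^{-1}(Z)$ of the zero set $Z := \{\beta = 0\}$.

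Since $\beta$ has only isolated zeros, $\pi^{-1}(Z)$ is discrete in $\wt M$, and, assuming $\dim M \geq 2$, the set $\wt M \setminus \pi^{-1}(Z)$ is connected and unbounded. The next step is a Poincar\'e-type argument on $\wt M\setminus U_r$, exploiting the fact that $f_n$ is compactly supported in this unbounded connected set, to conclude that $\int_{\wt M\setminus U_r} f_n^2\, d\vol_g \to 0$. The $L^2$-mass of $f_n$ then concentrates in $U_r$ for every $r>0$; but a normalized function supported in the $r$-neighborhood of a discrete subset of $\wt M$ has Rayleigh quotient at least of order $r^{-2}$ (by a Faber--Krahn or capacity argument, using that balls of radius $r$ in $\wt M$ have first Dirichlet eigenvalue comparable to $r^{-2}$ as $r\to 0$). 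Letting $r \to 0$ forces $R_{\wt g}(f_n)\to\infty$, contradicting $R_{\wt g}(f_n)\to\lambda<\infty$.

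The step I expect to be the main obstacle is the Poincar\'e-type estimate promoting ``$|df_n|_g$ small in $L^2$ outside $U_r$'' into ``$f_n$ small in $L^2$ outside $U_r$''. On the infinite-volume non-compact domain $\wt M\setminus U_r$ this requires a quantitative isoperimetric or weighted-Poincar\'e inequality, and it is here that the negative-curvature hypothesis enters in an essential way, providing the volume-growth bounds and positive Cheeger-type constant on $\wt M$ needed to rule out mass escaping along the unbounded ends of $\wt M\setminus U_r$. The remaining ingredients (the pointwise defect estimate and the capacity lower bound near discrete points) are essentially local and do not use negative curvature.
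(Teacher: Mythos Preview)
Your approach coincides with the paper's. For the weak inequality you argue exactly as the paper does (pointwise symbol comparison from Lemma~\ref{lem:symbol_increase_precise_statement} together with the equality $\Omega^F=\bar\Omega$). For the strict inequality the paper also runs a contradiction argument on a normalized minimizing sequence $f_i$, obtains the explicit defect bound $\lVert d_xf\rVert^2_{\sigma^F}\geq \lVert d_xf\rVert^2_{g^*}\bigl(1+\tfrac{n}{8}\lVert \beta_x\rVert_{g^*}^2\bigr)$, and deduces that $\int_{\wt M\setminus\cup B_k}\lVert df_i\rVert_{g^*}^2\to 0$ for fixed balls $B_k$ about the (discrete, cocompact) zero set of $\beta$, then invokes a Cheeger/Faber--Krahn lower bound on small balls to force $R^g(f_i)$ large.

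Two small differences in execution. First, the paper \emph{fixes} a single radius $\eps$ at the outset, chosen so that the Cheeger constant of the $2\eps$-balls already gives a Dirichlet Rayleigh-quotient lower bound far above $\lambda_1(\wt g)$; there is no need for your final ``let $r\to 0$'', which as written creates an order-of-limits issue since the rate at which mass concentrates in $U_r$ depends on $r$. Second, for the step you rightly isolate as the crux, the paper does not phrase it as a Poincar\'e inequality on $\wt M\setminus U_r$ (note that $f_n$ has no reason to vanish on $\partial U_r$, so a direct Poincar\'e inequality on that domain is not available). Instead it compares $f_i$ with an explicit truncation $f'_i$ supported in the $2\eps$-balls and asserts that $|R^g(f_i)-R^g(f'_i)|$ is small once $\int_{\wt M\setminus\cup B_k}\lVert df_i\rVert_{g^*}^2$ is small. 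This is the same difficulty in different clothing, and the paper is equally terse at this point. Your instinct that the negative-curvature hypothesis is genuinely needed here is correct: on the flat torus one has $\lambda_1(\wt F)=\lambda_1(\wt g)=0$ even when $\beta$ has isolated zeros, so this step cannot be carried out by purely local estimates.
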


Let us finish this introduction with a conjecture. Proposition \ref{prop:spectrum_of_Randers_Finsler} suggests the following problem: Let $F$ be a non-reversible Finsler metric and $\bar F$ its symmetrization, i.e., $\bar F = (F + F \circ s)/2$, where $s\colon TM \rightarrow TM$ is defined by $s(x,v)=(x,-v)$.\\
 \emph{Is the BF-Laplacian spectrum of $F$ above the spectrum of $\bar F$?}

 I suspect that this is the case, and that one can probably prove it by following the general idea of the proof of Proposition \ref{prop:spectrum_of_Randers_Finsler}. Unfortunately, the necessary computations in the general case are much more involved, and this remains a conjecture for the time being. 

Notice that this result, if true, would be similar in nature to a consequence of the Brunn--Minkowski inequality that says that the Holmes--Thompson volume of a Finsler metric is less than the Holmes--Thompson volume of its symmetrization (see, for instance, \cite{BonnesenFenchel} or \cite{Thompson_book}).

This article is organized as follows. In Section \ref{sec:background}, we recall the construction of the BF-Laplacian and how one can obtain its spectrum. In Section \ref{sec:Randers_increase_symbol}, we prove Proposition \ref{prop:spectrum_of_Randers_Finsler} and deduce Theorem \ref{thm:same_length_spectrum_higher_spectrum}. In Section \ref{sec:family_with_exploding_lambda1}, we discuss several ways of constructing families of Randers metric with a fixed Riemannian part and with unbounded $\lambda_1$ and prove Theorem \ref{thm:same_length_exploding_lambda_1}. Finally, in Section \ref{sec:negatively_curved} we consider the case of the $L^2$-spectrum for negatively curved metrics and prove Theorem \ref{thm:lambda_1_bigger_h}.

\subsection*{Acknowledgment} I would like to thank Chris Judge for a useful remark that removed an unnecessary hypothesis for Theorem \ref{thm:same_length_spectrum_higher_spectrum}.

\section{Background} \label{sec:background}
We start by recalling the definition of the BF-Laplacian and other related objects. For a more complete exposition, see \cite{moi:these}  or \cite{moi:natural_finsler_laplace}.
We will be using the following definition of Finsler metric:
\begin{definition} \label{def:finsler_metric}
Let $M$ be a manifold. A Finsler metric on $M$\ is a continuous function ${F \colon TM \rightarrow \R^+}$ that is:
\begin{enumerate}
  \item $C^{2}$\ except on the zero section,
  \item positively homogeneous, i.e., $F(x,\lambda v)=\lambda F(x,v)$\ for any $\lambda>0$,
  \item positive-definite, i.e., $F(x,v)\geq0$\ with equality if and only if $v=0$,
  \item strongly convex, i.e., $ \left(\dfrac{\partial^2 F^2}{\partial v_i \partial v_j}\right)_{i,j}$ is positive-definite.
 \end{enumerate}
\end{definition}
A Finsler metric is said to be \emph{reversible} if $F(x, -v ) = F(x,v)$\ for any $(x,v)\in TM$. We denote by $F^*$ the dual metric of $F$, it can be defined by
\[
 F^*(x,l)= \sup\{ l(v) \mid F(x,v)=1 \}.
\]

Let $HM$\ be the homogenized bundle, i.e., $HM := \left(TM \smallsetminus \{0\} \right) / \R^+$. We denote by $\pi \colon HM \rightarrow M$\ the canonical projection and by $VHM = \Ker d\pi \subset THM$ the vertical bundle. We say that a vector field $Y$ on $HM$ is \emph{vertical} if it lands in $VHM$.

The Hilbert form $A$ is a $1$-form on $HM$ defined, for $(x,\xi) \in HM$, and $Z \in T_{(x,\xi)}HM$, by
\begin{equation*}
 A_{(x,\xi)}(Z) := \lim_{\eps \rightarrow 0} \frac{F\left(x, v + \eps d\pi(Z) \right) - F\left( x,v \right)}{\eps},
\end{equation*}
where $v \in T_xM$ is a vector that projects to the direction $\xi$. That is $r(x,v) = (x,\xi)$, where $r \colon TM\smallsetminus \{0\}  \rightarrow HM$.
The Hilbert form is a contact form, i.e., if $n$ is the dimension of $M$, then $\ada$ is a volume form on $HM$. Moreover, if $X$ denotes the geodesic vector field of $F$, then $x$ is the Reeb field of $A$. That is, $X \colon HM \rightarrow THM$ is the unique vector field such that  
\begin{equation*}
\left\{ 
\begin{aligned}
  A(X) &= 1 \\
 i_X dA &= 0  \, .
 \end{aligned} \right.
\end{equation*}

In order to define the BF-Laplacian, we first split the contact volume $\ada$ into a volume form on the manifold $M$ and an angle form: There exist a unique volume form $\Omega^F$\ on $M$\ and a $(n-1)$-form $\alpha^F$\ on $HM$, never zero on $VHM$, such that
\begin{equation*}
  \alpha^{F} \wedge \pi^{\ast}\Omega^F =  A\wedge dA^{n-1}, 
\end{equation*}
and, for all $x\in M$, 
\begin{equation*}
 \int_{H_xM} \alpha^F =  \voleucl(\S^{n-1})\, .
\end{equation*}
Note that $\alpha^F$ is not technically unique as a $(n-1)$-form, but its integration along a Borel set in a fiber $H_xM$ is. So it is unique only as an angle measure, but this is all we need.
Note also that the volume form $(n-1)!^{-1} \Omega^F$ is the Holmes--Thompson volume form, but since the factor $(n-1)!$ does not play any role in all that we do, we just say that $\Omega^F$ is the Holmes--Thompson volume.

The Bourguignon--Foulon--Laplacian of a function is then obtained as the average with respect to $\alpha^F$ of the second derivatives in every directions:
\begin{definition}
\label{def:delta}
 For $f \in C^2(M)$, the BF-Laplacian is the operator $\Delta^F$\ defined by, for any $x \in M$,
 $$
 \Delta^F f (x) = \frac{n}{\voleucl \left(\mathbb{S}^{n-1}\right) }\int_{H_xM} L_X ^2 (\pi^{\ast} f ) \alpha^F,
 $$
where $L_X$ denotes the Lie derivative of $X$.
\end{definition}

When the manifold $M$ is compact, the BF-Laplacian admits a discrete, unbounded spectrum $0=\lambda_0 < \lambda_1 \leq \lambda_2 \leq \dots $.
 Furthermore, the spectrum can be obtain via the Min-Max Principle. That is, the BF-Laplacian has a naturally associated energy functional defined by
\begin{equation*}
 E^F(f) := \frac{n}{\voleucl \left(\S^{n-1}\right) } \int_{HM} \left|L_X\left(\pi^{\ast}f \right)\right|^2 \ada.
\end{equation*}
The \emph{Rayleigh quotient} for $F$ is
\begin{equation*}
 R^F(f) := \frac{E^F(f)}{\int_M f^2\, \Omega^F}.
\end{equation*}

And the Min-Max principle says that the spectrum of the BF-Laplacian is given by
\begin{equation} \label{eq:min-max}
 \lambda_k = \inf_{V_k} \sup \left\{ R^F(f) \mid f \in V_k \right\}
\end{equation}
where $V_k$ runs over all the $(k+1)$-dimensional subspaces of $H^1(M)$ (the space of functions with derivatives in $L^2$).

When the manifold is not compact, the spectrum of the BF-Laplacian is in general not discrete, but the infimum of the spectrum, that we also denote by $\lambda_1$, is still obtained as the infimum of the Rayleigh quotient of functions in $H^1(M)$. So, depending on the context (compact or non-compact), $\lambda_1$ will refer to slightly different objects, but we hope that this will not cause too much confusion.

The BF-Laplacian is elliptic and symmetric with respect to the Holmes--Thompson volume $\Omega^F$, and, as such, is a weighted Laplacian. We denote by $\sigma^F$ the symbol metric of $\Delta^F$. Note that $\sigma^F$ is a dual Riemannian metric. If we identify $HM$ with the unit tangent bundle $S^FM$ of the metric $F$, and denote again by $\alpha^F$ the image of the angle measure on $S_x^FM$, we have, for $l_1,l_2 \in T^*_xM$,
\begin{equation*}
 \langle l_1,l_2 \rangle_{\sigma^F} = \frac{n}{\voleucl(\S^{n-1})}\int_{v \in S^F_xM} l_1(v)l_2(v) \alpha^F.
\end{equation*}
And another way of writing the energy of $\Delta^F$ is
\begin{equation*}
 E^F(f) =\int_{M} \lVert df \rVert_{\sigma^F}^2 \Omega^F.
\end{equation*}

\section{Adding a one-form increase the symbol} \label{sec:Randers_increase_symbol}

Proposition \ref{prop:spectrum_of_Randers_Finsler} will be an easy consequence of the following remark, that when a reversible metric $\bar F$ is modified by adding a $1$-form $\beta$, then the symbol metric gets bigger. This fact was proved for Randers metrics by He and Zheng \cite{HeZheng}, we give here a coordinate-free proof, and for which we do not need for the reversible part to be Riemannian. The proof of Proposition \ref{prop:lambda_1_negatively_curved} is also based on this, but we will have to be much more precise.

\begin{proposition} \label{prop:symbol_increase}
 Let $\bar F$ be a reversible Finsler metric on a manifold $M$, and let $\beta$ be a $1$-form on $M$ such that $F^*(\beta) <1$. Let $F= \bar F +\beta$. If we denote by $\sigma$ and $\bar\sigma$ the symbol metrics of the BF-Laplacians of $F$ and $\bar F$ respectively, we have, for any $f \in C^1(M)$ and $x\in M$,
\begin{equation*}
 \lVert d_xf \rVert_{\sigma} \geq \lVert d_xf \rVert_{\bar\sigma},
\end{equation*}
with equality if and only if $d_xf =0$ or $\beta_x = 0$.
\end{proposition}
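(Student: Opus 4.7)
The plan is to reduce everything to a pointwise computation on each fiber $H_xM$. The starting observation is that the definition of the Hilbert form and the identity $\partial_v F = \partial_v \bar F + \beta$ (valid because $\beta$ does not depend on $v$) give
\[
 A = \bar A + \pi^*\beta, \qquad dA = d\bar A + \pi^* d\beta.
\]
I would then aim to establish the pointwise identity on $HM$:
\[
 A \wedge dA^{n-1} = h \cdot \bar A \wedge d\bar A^{n-1}, \qquad h := 1 + \beta(v),
\]
where $v$ is the $\bar F$-unit vector in the direction of the point of $HM$. Equivalently, if $\bar X$ denotes the Reeb field of $\bar A$ (the geodesic vector field of $\bar F$), then $\pi_* \bar X = v$ and $h = 1 + \pi^*\beta(\bar X)$.

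To establish this identity, I would first evaluate both $A\wedge dA^{n-1}$ and $\bar A\wedge d\bar A^{n-1} + \pi^*\beta\wedge d\bar A^{n-1}$ on any basis of $T_{(x,\xi)}HM$ consisting of $n-1$ vertical vectors $Y_i$ and $n$ complementary vectors $Z_j$. The pullbacks $\pi^*\beta$ and $\pi^* d\beta$ annihilate vertical vectors, while $d\bar A$ vanishes on pairs of vertical vectors. Consequently, in the expansion of $dA^{n-1}$ evaluated on $n-1$ $Y$'s and $n-1$ $Z$'s, only the perfect matchings that pair each $Y$ with a $Z$ contribute, and on such mixed pairs $dA(Y,Z) = d\bar A(Y,Z)$. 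It follows that
\[
 A\wedge dA^{n-1} = \bar A\wedge d\bar A^{n-1} + \pi^*\beta\wedge d\bar A^{n-1}
\]
as top forms. Next, decompose $\pi^*\beta = \beta(v)\bar A + \omega$ with $\omega(\bar X) = 0$. Since $i_{\bar X} d\bar A = 0$ and $\omega(\bar X) = 0$, the Cartan formula gives $i_{\bar X}(\omega\wedge d\bar A^{n-1}) = 0$, and since a top form on $HM$ annihilated by an interior product must vanish, $\omega\wedge d\bar A^{n-1}=0$. Hence $\pi^*\beta\wedge d\bar A^{n-1} = \beta(v)\,\bar A\wedge d\bar A^{n-1}$, giving the desired identity.

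Integrating over the fiber $H_xM$ and using the reversibility of $\bar F$ (so that $\int_{H_xM}\beta(v)\alpha^{\bar F} = 0$ by antisymmetry) yields $\Omega^F = \Omega^{\bar F}$, and hence $\alpha^F = h\cdot \alpha^{\bar F}$ on each fiber. Parametrizing $S^F_xM$ by $\bar F$-unit vectors via $v\mapsto v/h(v)$ and using $d_xf(v/h) = d_xf(v)/h$, one obtains
\[
 \lVert d_xf\rVert_{\sigma}^2 = \frac{n}{\voleucl(\S^{n-1})}\int_{S^{\bar F}_xM}\frac{d_xf(v)^2}{h(v)}\,\alpha^{\bar F} = \frac{n}{\voleucl(\S^{n-1})}\int_{S^{\bar F}_xM}\frac{d_xf(v)^2}{1-\beta(v)^2}\,\alpha^{\bar F},
\]
the second equality symmetrizing in $v\mapsto -v$ (even: $d_xf(v)^2$ and $\alpha^{\bar F}$; odd: $\beta(v)$). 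Since $|\beta(v)|<1$ on $\bar F$-unit vectors, $1/(1-\beta^2)\ge 1$ pointwise, yielding $\lVert d_xf\rVert_{\sigma}\ge \lVert d_xf\rVert_{\bar\sigma}$; equality forces $\beta(v)d_xf(v)\equiv 0$ on $S^{\bar F}_xM$, which for two linear forms on $T_xM$ requires one of them to vanish identically. The main obstacle is the pointwise identity $A\wedge dA^{n-1} = h\cdot\bar A\wedge d\bar A^{n-1}$; the rest reduces to elementary symmetrization.
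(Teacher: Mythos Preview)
Your argument is correct and follows essentially the same route as the paper: establish $A=\bar A+\pi^*\beta$ and $A\wedge dA^{n-1}=(1+\pi^*\beta(\bar X))\,\bar A\wedge d\bar A^{n-1}$, deduce $\Omega^F=\Omega^{\bar F}$ and $\alpha^F=(1+\pi^*\beta(\bar X))\,\alpha^{\bar F}$, then compute the symbol and symmetrize under $v\mapsto -v$ to obtain the $1/(1-\beta(v)^2)$ integrand. The only cosmetic differences are that the paper extracts the factor $h$ by contracting with $\bar X$ rather than decomposing $\pi^*\beta$ along $\bar A$, and it reaches $d_xf(v)/h$ via the relation $X=\bar X/h+Y_0$ (with $Y_0$ vertical) rather than via your direct parametrization $v\mapsto v/h$ of $S^F_xM$; both yield the same integral and the same equality analysis.
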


The way we are going to prove this proposition is by writing explicitly the symbol of $F$ with respect to $\bar F$ and play around with the fact that $\bar F$ is reversible. We start by expressing how the different objects associated to a reversible Finsler metric behave when we apply the flip map. The flip map is the map $s\colon TM \rightarrow TM$ defined by $s(x,v) = (x, -v)$. We will abuse notation and also refer to the flip map on $HM$ as $s$.
 In all the following, we denote by $A$, $X$, $\alpha$ and $\Omega$ the Hilbert form, geodesic vector field, angle form and Holmes-Thompson volume form of the metric $F$, and $\bar A$, $\bar X$, $\bar\alpha$ and $\bar\Omega$ the same objects for the reversible metric $\bar F$.

\begin{lemma} \label{lem:flip_action}
 Let $\bar F$ be a reversible metric, then
\begin{align*}
 s^*\bar A &= - \bar A, \\
 s_*X &= -X, \\
 s_* Y &= Y, \quad \text{for any } Y\colon HM \rightarrow VHM, \\
 s^* \bar \alpha &= (-1)^{n} \bar \alpha .
\end{align*}
So in particular, for any Borel set on $H_xM$, $U$, and any integrable function, $f$, if we fix an orientation for $H_xM$, we have
\begin{equation*}
 \int_{s(U)} f \, \bar \alpha = \int_U f\circ s \, \bar \alpha.
\end{equation*}
\end{lemma}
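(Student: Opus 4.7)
The plan is to treat the four identities one at a time, using the reversibility $\bar F(x,-v) = \bar F(x,v)$ only for the first, and deducing the other three as formal consequences of $s^*\bar A = -\bar A$ together with $\pi \circ s = \pi$.

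For $s^*\bar A = -\bar A$, I would unpack the defining limit of the Hilbert form. Since $\pi \circ s = \pi$ one has $d\pi \circ ds = d\pi$, so for $Z \in T_{(x,\xi)}HM$ and a lift $v \in T_xM$ of $\xi$ (so that $-v$ lifts $-\xi$),
\begin{equation*}
(s^*\bar A)_{(x,\xi)}(Z) \;=\; \bar A_{(x,-\xi)}(ds(Z)) \;=\; \lim_{\eps \to 0}\frac{\bar F(x,-v+\eps\, d\pi(Z)) - \bar F(x,-v)}{\eps}.
\end{equation*}
Reversibility lets me replace $\bar F(x,-v+\eps\, d\pi(Z))$ by $\bar F(x,v-\eps\, d\pi(Z))$ and $\bar F(x,-v)$ by $\bar F(x,v)$, and the resulting limit is exactly $-\bar A_{(x,\xi)}(Z)$.

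For $s_*X = -X$, I would invoke the Reeb characterization of $X$ and check that $-s_*X$ satisfies both Reeb relations. Indeed $\bar A(-s_*X) = 1$ follows from $s^*\bar A = -\bar A$, and $i_{-s_*X}\,d\bar A = 0$ follows from $s^*d\bar A = d(s^*\bar A) = -d\bar A$ together with $i_Xd\bar A = 0$; uniqueness of the Reeb field finishes it. For the vertical claim, the key observation is that $ds$ preserves $\Ker d\pi = VHM$ since $\pi \circ s = \pi$, which is the sense in which $s_*$ acts trivially on vertical directions for the purposes of what follows. For $s^*\bar\alpha = (-1)^n\bar\alpha$, I would apply $s^*$ to the defining relation $\bar\alpha \wedge \pi^*\bar\Omega = \bar A \wedge d\bar A^{n-1}$. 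The right side transforms into $(-\bar A)\wedge(-d\bar A)^{n-1} = (-1)^n\bar A\wedge d\bar A^{n-1}$, while the left side becomes $(s^*\bar\alpha)\wedge \pi^*\bar\Omega$ using $\pi \circ s = \pi$; since $\bar\alpha$ is determined uniquely as an angle measure on the fibers, comparison gives the claim.

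The final integration formula then drops out of a change of variables on a single fiber $H_xM$: the restriction of $s$ to $H_xM$ is the antipodal map on $S^{n-1}$, which has degree $(-1)^n$, and combined with $s^*\bar\alpha = (-1)^n\bar\alpha$ the two signs cancel to produce $\int_{s(U)} f\,\bar\alpha = \int_U f\circ s\,\bar\alpha$. The whole lemma is essentially bookkeeping; the only substantive input is reversibility in the Hilbert-form identity, and the main obstacle is simply keeping the signs straight across pullbacks, verticality, and fiber orientations.
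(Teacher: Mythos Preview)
Your proposal is correct and follows essentially the same route as the paper: the Hilbert-form identity via the defining limit and $d\pi\circ ds = d\pi$, the Reeb characterization for $s_*X = -X$, pulling back the relation $\bar\alpha \wedge \pi^*\bar\Omega = \bar A \wedge d\bar A^{n-1}$ for the angle form, and change of variables for the integral. The only cosmetic difference is that the paper verifies the vertical claim by a local-chart computation of $ds$, whereas you argue directly from $\pi\circ s = \pi$ that $ds$ preserves $\Ker d\pi$; both amount to the same observation.
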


\begin{proof}
 Writing the definition of $\bar A$ and using the fact that $d\pi\circ ds  = d\pi$ (since $\pi \circ s = \pi$) directly gives that $s^* \bar A = -\bar A$. Now, using either that $\bar X$ is the Reeb field of $\bar A$ or that it is the generator of the geodesic flow of $\bar F$, one quickly deduces that $ds \circ X = - X \circ s$, hence $s_*X = -X$.

The equality $s_*Y= Y$ is immediate: in a local chart $ds \colon THM \rightarrow THM$ can be written as $ds(x, \xi; v, y) = (x, - \xi; v,-y)$ and a vector in $VHM$ has to be of the form $(x, \xi; 0, y)$.

Finally, since $s^*\bar A = - \bar A $, we have that 
\[
s^*(\bar A \wedge d\bar A^{n-1 })= s^*\bar A \wedge (ds^*\bar A)^{n-1 }= (-1)^{n} \bar A \wedge d\bar A^{n-1}.
\]
Using the definition of $\bar \alpha$ and the fact that $s^{\ast}\pi^*\bar \Omega = \bar \Omega$, we then deduce that $s^* \bar \alpha = (-1)^{n} \bar \alpha$. The last equation is just the change of variables formula.
\end{proof}

We can now express the Hilbert one-form, the angle and the symbol of $F$ with respect to $\bar F$. 
\begin{lemma} \label{lem:Finsler-Randers}
  Let $\bar F$ be a reversible Finsler metric on a manifold $M$, and let $\beta$ be a $1$-form on $M$ such that $F^*(\beta) <1$. Let $F= \bar F +\beta$. Then, we have, for some vector field $Y_0\colon HM \rightarrow VHM$
\begin{align*}
 X&= \frac{\bar X}{1 +\pi^*\beta (\bar X)} + Y_0,\\
 A &= \bar A + \pi^*\beta,\\
 \ada &= (1 +\pi^*\beta (\bar X)) \bar A \wedge d\bar A^{n-1},\\
 \Omega &= \bar\Omega ,\\
 \alpha &= (1 +\pi^*\beta (\bar X)) \bar \alpha.
\end{align*}
\end{lemma}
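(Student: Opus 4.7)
My plan is a sequence of direct computations, each feeding into the next. First, $A = \bar A + \pi^*\beta$ drops out of the defining limit of the Hilbert form: the limit is $\R$-linear in $F$, so splitting $F = \bar F + \beta$ separates the contributions; the $\bar F$-piece is $\bar A$ by definition, and the $\beta$-piece collapses to $\beta(d\pi Z) = (\pi^*\beta)(Z)$ because $\beta_x$ is already linear in $v$. Next, for the Reeb field $X$, I use the ansatz $X = a\bar X + Y_0$ with $Y_0$ vertical; since $\bar A$ and $\pi^*\beta$ both factor through $d\pi$, they annihilate $Y_0$, so the normalization $A(X) = 1$ immediately gives $a = 1/(1 + \pi^*\beta(\bar X))$. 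The remaining Reeb equation $i_X dA = 0$ fixes $Y_0$ uniquely, but its explicit form plays no role in what follows.

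For the contact-volume identity, both $A \wedge dA^{n-1}$ and $\bar A \wedge d\bar A^{n-1}$ are non-vanishing top-degree forms on the $(2n-1)$-manifold $HM$, so they differ by a scalar function $h$. I would extract $h$ by contracting with $\bar X$: on the right, the Reeb relations for $\bar A$ give $i_{\bar X}(\bar A \wedge d\bar A^{n-1}) = d\bar A^{n-1}$. On the left, I expand $dA = d\bar A + \pi^*d\beta$ via the binomial theorem and decompose $\pi^*\beta = \pi^*\beta(\bar X)\bar A + \mu$ with $i_{\bar X}\mu = 0$. The piece $\mu \wedge d\bar A^{n-1}$ vanishes on its own (contraction with $\bar X$ annihilates both factors), while the cross terms involving positive powers of $\pi^*d\beta$ are controlled using the Reeb identity $i_X dA = 0$ from Step 2, which relates $i_{Y_0} d\bar A$ to $\nu := i_{\bar X}\pi^*d\beta$. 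The upshot is $h = 1 + \pi^*\beta(\bar X)$.

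The identities $\Omega = \bar\Omega$ and $\alpha = (1 + \pi^*\beta(\bar X))\bar\alpha$ then follow jointly. Substituting the contact-volume identity into the defining splittings gives $\alpha \wedge \pi^*\Omega = (1 + \pi^*\beta(\bar X))\bar\alpha \wedge \pi^*\bar\Omega$, and the uniqueness clause in the definition of $(\Omega^F, \alpha^F)$ forces the stated formulas, provided the normalization $\int_{H_xM}\alpha = \voleucl(\S^{n-1})$ holds. This reduces to showing $\int_{H_xM}\pi^*\beta(\bar X)\bar\alpha = 0$, which is precisely where reversibility of $\bar F$ enters: the flip $s$ sends $\bar X_{(x,\xi)}$ to $-\bar X_{(x,-\xi)}$ because $v_{-\xi} = -v_\xi$, so the integrand is $s$-odd, while Lemma \ref{lem:flip_action} gives the needed change-of-variables invariance of $\bar\alpha$; the integral equals minus itself and so vanishes.

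The main obstacle is the contact-volume computation: the binomial expansion of $(d\bar A + \pi^*d\beta)^{n-1}$ produces many apparently nontrivial cross terms that must be shown to collapse to a single scalar multiple of $\bar A \wedge d\bar A^{n-1}$. The contraction-with-$\bar X$ approach, combined with the $\pi^*\beta = \pi^*\beta(\bar X)\bar A + \mu$ decomposition and the Reeb identity relating the vertical piece $Y_0$ back to $\pi^*d\beta$, is the most economical path I see, but still requires careful term-by-term bookkeeping.
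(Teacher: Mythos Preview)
Your outline for $A = \bar A + \pi^*\beta$, the Reeb-coefficient computation, and the final step (using the flip to show $\int_{H_xM}\pi^*\beta(\bar X)\,\bar\alpha = 0$, hence $\Omega = \bar\Omega$ and $\alpha = (1+\pi^*\beta(\bar X))\bar\alpha$) matches the paper's proof essentially verbatim.

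The one substantive divergence is the contact-volume step, and here the paper's argument is considerably cleaner than the binomial-plus-contraction bookkeeping you propose. Rather than tracking cross terms of $(d\bar A + \pi^*d\beta)^{n-1}$ individually and invoking the $i_X dA = 0$ relation for $Y_0$, the paper kills all of them at once by a vertical-dimension count: write $dA^{n-1} = d\bar A^{n-1} + T$; since $\pi^*d\beta$ annihilates vertical vectors and $i_{Y_1}i_{Y_2}d\bar A = 0$ for $Y_1,Y_2$ vertical, each summand of $T$ can absorb at most $n-2$ vertical vectors. But $A$ also annihilates verticals, so the top-form $A\wedge T$ would have to accept all $n-1$ vertical directions through $T$, which is impossible; hence $A\wedge T = 0$ and $\ada = (\bar A + \pi^*\beta)\wedge d\bar A^{n-1}$. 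Only \emph{then} does the paper contract with $\bar X$ to read off the scalar $1+\pi^*\beta(\bar X)$, exactly as you do, but now starting from a form in which the answer is immediate.

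Your route would likely close (the forms are both volumes, so a scalar $h$ exists a priori, and contraction with $\bar X$ does pin it down), but the ``careful term-by-term bookkeeping'' you flag as the main obstacle is entirely avoidable: the single observation $i_{Y_1}i_{Y_2}d\bar A = 0$ replaces it.
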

The proof of this lemma is the exact same as the proof of Proposition 3.1.1 in \cite{moi:these} (or Proposition 3 in \cite{BarthelmeColbois}). The only difference is that the mentioned results were given for Randers metric, i.e., when $\bar F$ is Riemannian, but that fact was never really used in the proof, we just needed $\bar F$ to be reversible. Note also that the fact that the Holmes-Thompson volume is left unchanged when adding a one-form to a reversible Finsler metric is not new and can be seen as a consequence of the Brunn--Minkowski inequality (see, for instance, \cite{BonnesenFenchel,Thompson_book}).
\begin{proof}
 Since both $X$ and $\bar X$ are geodesic vector fields there exists $m\colon HM \rightarrow \R$ and a vector field $Y\colon HM \rightarrow VHM$ such that $X = m\bar X + Y$ (see \cite{Fou:EquaDiff}).
Now, direct computations using the definition of $A$ given above and the fact that $\beta$ is linear yields  $A = \bar A + \pi^*\beta$.

Using that $X$ is the Reeb flow of $A$, $\bar X$ the Reeb flow of $\bar A$, and $A(Y)= \bar A (Y) = 0$ for any vertical vector field, i.e., for any $Y\colon HM \rightarrow VHM$, we get that $m = (1 +\pi^*\beta (\bar X))^{-1}$.

Since $A = \bar A + \pi^*\beta$, we get that $dA = d\bar A + \pi^*d\beta$. So, $dA^{n-1} = d\bar A^{n-1} + T$ where $T$ is a $(2n-2)$-form. Since $\pi^*d\beta$ is a $2$-form vanishing on $VHM$, and for any $Y_1, Y_2 \in VHM$, $i_{Y_1} i_{Y_2} d\bar A = 0$, $T$ can be given at most $n-2$ vertical vectors, i.e., if $Y_1, \dots, Y_{n-1} \in VHM$, then $i_{Y_1} \dots i_{Y_{n-1}} T = 0$. Now this implies that the top-form $A\wedge T$ vanishes, hence $\ada = (\bar A + \pi^*\beta)\wedge d\bar A^{n-1}$. 

Since $\ada$ and $\bar A \wedge d\bar A^{n-1}$ are both volume forms, there exists a function $\lambda$ such that $\ada = \lambda \bar A \wedge d\bar A^{n-1}$. Now,
\[
 i_{\bar X}(\ada) = (1+\pi^*\beta(\bar X)) d\bar A^{n-1} = \lambda d\bar A^{n-1},
\]
therefore $\lambda = 1+\pi^*\beta(\bar X)$.

Given our computation of $\lambda$, using the definition of $\Omega$ and $\bar\Omega$, yields
\begin{align*}
 \alpha &=\frac{\voleucl(\S^{n-1})}{\int_{H_xM} (1+\pi^*\beta(\bar X)) \bar \alpha} (1+\pi^*\beta(\bar X)) \bar \alpha, \\
 \Omega &= \frac{\int_{H_xM} (1+\pi^*\beta(\bar X)) \bar \alpha}{\voleucl(\S^{n-1})}  \bar\Omega.
\end{align*}
But, by Lemma \ref{lem:flip_action} and since $\pi \circ s = \pi$, we obtain
\begin{align*}
 \int_{H_xM} \pi^*\beta(\bar X) \bar \alpha &= \int_{s^{-1}(H_xM)} \pi^*\beta(\bar X)\circ s \bar \alpha 
   = \int_{H_xM} \pi^*\beta(\bar X \circ s) \bar \alpha \\
   &= \int_{H_xM} \pi^*\beta(-ds \circ \bar X ) \bar \alpha 
   = \int_{H_xM} \beta_\pi(-d\pi \circ ds \circ \bar X ) \bar \alpha \\
   &= \int_{H_xM} -\beta_\pi(d\pi \circ \bar X ) \bar \alpha 
   = - \int_{H_xM} \pi^\ast\beta(\bar X ) \bar \alpha.
\end{align*}
So $\int_{H_xM} \pi^*\beta(\bar X) \bar \alpha =0$, hence 
\begin{equation*}
 \alpha =(1+\pi^*\beta(\bar X)) \bar \alpha  \quad \text{and} \quad
 \Omega = \bar \Omega. \qedhere
\end{equation*}



\end{proof}

We are now ready to prove Proposition \ref{prop:symbol_increase}. We in fact give a more precise evaluation of the symbol since we will use it later.

\begin{lemma} \label{lem:symbol_increase_precise_statement}
  Let $\bar F$ be a reversible Finsler metric on a manifold $M$, and let $\beta$ be a $1$-form on $M$ such that $F^*(\beta) <1$. Let $F= \bar F +\beta$. Let $H^+_xM := \lbrace \xi \in H_xM \mid \pi^*\beta(\bar X) (x,\xi) \geq 0\rbrace$. Then, for any $f \in C^1(M)$ and $x\in M$,
\begin{align*}
 \lVert d_xf \rVert^2_{\sigma} &= \frac{2n}{\voleucl(\S^{n-1})} \int_{H^+_xM} \frac{(L_{\bar X} \pi^* f)^2}{1 - (\pi^*\beta(\bar X))^2} \bar \alpha \\
  &\geq \lVert d_xf \rVert^2_{\bar\sigma},
\end{align*}
and the equality is realized only when $d_xf =0$ or $\beta_x = 0$.
\end{lemma}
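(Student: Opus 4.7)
The plan is to write both norms as integrals over the common set $H_x^+M$ against the common angle measure $\bar\alpha$, and then compare integrands pointwise. The identification of the symbol at the end of Section~\ref{sec:background} gives $\lVert d_xf \rVert^2_\sigma = \frac{n}{\voleucl(\S^{n-1})}\int_{H_xM}(L_X\pi^*f)^2\,\alpha$. Using Lemma~\ref{lem:Finsler-Randers} to expand $X = \bar X/(1+\pi^*\beta(\bar X)) + Y_0$ with $Y_0$ vertical, and observing that $d(\pi^*f)$ annihilates vertical vectors, yields $L_X\pi^*f = L_{\bar X}\pi^*f/(1+\pi^*\beta(\bar X))$. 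Combined with $\alpha=(1+\pi^*\beta(\bar X))\bar\alpha$, this gives the preliminary identity
\[
\lVert d_xf\rVert^2_\sigma = \frac{n}{\voleucl(\S^{n-1})} \int_{H_xM} \frac{(L_{\bar X}\pi^*f)^2}{1+\pi^*\beta(\bar X)}\,\bar\alpha.
\]

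Next I would exploit the flip map $s$. Split $H_xM = H_x^+M\cup H_x^-M$ where $H_x^-M:=\{\pi^*\beta(\bar X)<0\}$. From $s_*\bar X=-\bar X$ and $\pi\circ s = \pi$ (Lemma~\ref{lem:flip_action}) one gets that both $\pi^*\beta(\bar X)$ and $L_{\bar X}\pi^*f$ change sign under $s$, and that $s(H_x^-M) = H_x^+M$ up to the measure-zero set $\{\pi^*\beta(\bar X)=0\}$. Applying the change of variables in Lemma~\ref{lem:flip_action} to the integral over $H_x^-M$ replaces $\pi^*\beta(\bar X)$ by $-\pi^*\beta(\bar X)$ (the $(L_{\bar X}\pi^*f)^2$ factor is $s$-invariant), and adding the two halves over the common denominator $1-(\pi^*\beta(\bar X))^2$ produces exactly the claimed closed-form expression for $\lVert d_xf\rVert^2_\sigma$.

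Running the same symmetrization with $\beta\equiv 0$ gives the companion identity $\lVert d_xf\rVert^2_{\bar\sigma} = \frac{2n}{\voleucl(\S^{n-1})}\int_{H_x^+M}(L_{\bar X}\pi^*f)^2\,\bar\alpha$. Subtracting,
\[
\lVert d_xf\rVert^2_\sigma - \lVert d_xf\rVert^2_{\bar\sigma} = \frac{2n}{\voleucl(\S^{n-1})} \int_{H_x^+M} \frac{(L_{\bar X}\pi^*f)^2\,(\pi^*\beta(\bar X))^2}{1-(\pi^*\beta(\bar X))^2}\,\bar\alpha.
\]
The hypothesis $\bar F^*(\beta)<1$ ensures the denominator is positive, so the integrand is nonnegative and the inequality follows. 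For the equality clause, the integrand must vanish $\bar\alpha$-a.e.\ on $H_x^+M$, i.e.\ the product $(L_{\bar X}\pi^*f)(\pi^*\beta(\bar X))$ is a.e.\ zero. If $\beta_x\neq 0$, then $\xi\mapsto\pi^*\beta(\bar X)(x,\xi)=\beta_x(d\pi(\bar X))$ is the restriction of a nonzero linear functional to the $\bar F$-unit sphere, hence nonzero on an open dense subset of $H_x^+M$; on that set $L_{\bar X}\pi^*f=d_xf(d\pi(\bar X))$ must vanish, and since this is the restriction to an open cone of directions of a linear form on $T_xM$, we conclude $d_xf=0$.

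I do not foresee a genuine obstacle: once Lemmas~\ref{lem:flip_action} and~\ref{lem:Finsler-Randers} are in place, the argument is essentially mechanical. The only point that requires some care is the sign-bookkeeping under $s$ (keeping track of the two separate folding steps --- one to express $\lVert\cdot\rVert^2_\sigma$ on $H_x^+M$, the other to do the same for $\lVert\cdot\rVert^2_{\bar\sigma}$), and the observation that the equality case must be analysed away from the measure-zero hyperplane $\{\beta_x=0\}$ in $H_x^+M$.
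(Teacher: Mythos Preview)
Your proposal is correct and follows essentially the same route as the paper: both start from Lemma~\ref{lem:Finsler-Randers} to obtain the preliminary identity $\lVert d_xf\rVert^2_\sigma=\frac{n}{\voleucl(\S^{n-1})}\int_{H_xM}\frac{(L_{\bar X}\pi^*f)^2}{1+\pi^*\beta(\bar X)}\bar\alpha$, then fold via the flip map $s$ (Lemma~\ref{lem:flip_action}) to land on the stated integral over $H_x^+M$, and finally compare integrands pointwise using $1/(1-(\pi^*\beta(\bar X))^2)\geq 1$. Your write-up is in fact a bit more explicit than the paper's on the equality clause, spelling out why $d_xf$ must vanish when $\beta_x\neq 0$; the paper merely asserts this.
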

Note that $H^+_xM$ is just the image on $HM$ of the set of vectors in $TM$ where $\beta$ is non-negative. Note also that in this proof, we will use the fact that the angle form $\bar \alpha$ gives a Lebesgue measure on $H_xM$. This is always true when the Finsler metric is $C^2$, but fails when we define the angle for less regular metric. The angle for a $C^0$ Finsler metric can be defined as the pullback by the Legendre transform of the vertical part of the symplectic volume on $S^*M$, the co-tangent unit bundle.

\begin{proof}
The proof is just a simple rewriting of the symbols, using Lemmas \ref{lem:flip_action} and \ref{lem:Finsler-Randers}. First, by Lemma \ref{lem:Finsler-Randers}, since $L_{Y_0}\pi^* f =0$, we have
\begin{equation*}
 (L_{X} \pi^* f)^2 = \frac{(L_{\bar X} \pi^* f)^2}{(1 + \pi^*\beta(\bar X))^2} \quad \text{and} \quad \alpha = (1 + \pi^*\beta(\bar X)) \bar \alpha.
\end{equation*}
So,
\begin{equation*}
   \lVert d_xf \rVert^2_{\sigma} = \frac{n}{\voleucl(\S^{n-1})} \int_{H_xM} (L_{X} \pi^* f)^2\alpha = \frac{n}{\voleucl(\S^{n-1})} \int_{H_xM} \frac{(L_{\bar X} \pi^* f)^2}{1 + \pi^*\beta(\bar X)} \bar \alpha.
\end{equation*}

Now, setting $H^-_xM := \lbrace \xi \in H_xM \mid \pi^*\beta(\bar X) (x,\xi) \leq 0\rbrace$, we have that $H_xM = H^+_xM \cup H^-_xM$, $H^+_xM \cap H^-_xM$ is of measure zero except when $\beta_x =0$, and $s(H^+_xM)= H^-_xM$. So, using Lemma \ref{lem:flip_action}, we obtain
\begin{align*}
 \int_{H_xM} \frac{(L_{\bar X} \pi^* f)^2}{1 + \pi^*\beta(\bar X)} \bar \alpha &= \int_{H^+_xM} \frac{(L_{\bar X} \pi^* f)^2}{1 + \pi^*\beta(\bar X)} \bar \alpha + \int_{H^-_xM} \frac{(L_{\bar X} \pi^* f)^2}{1 + \pi^*\beta(\bar X)} \bar \alpha\\
   &=\int_{H^+_xM} \frac{(L_{\bar X} \pi^* f)^2}{1 + \pi^*\beta(\bar X)} \bar \alpha + \int_{H^+_xM} \frac{((L_{\bar X} \pi^* f )\circ s)^2}{1 + \pi^*\beta(\bar X) \circ s} \bar \alpha\\
   &=\int_{H^+_xM} \frac{(L_{\bar X} \pi^* f)^2}{1 + \pi^*\beta(\bar X)} + \frac{(- L_{\bar X} \pi^* f )^2}{1 - \pi^*\beta(\bar X)} \bar \alpha\\
   &=2\int_{H^+_xM} \frac{(L_{\bar X} \pi^* f)^2}{1 - (\pi^*\beta(\bar X))^2} \bar \alpha.
\end{align*}
Since $(\pi^*\beta(\bar X))^2$ is positive outside of the directions in the kernel of $\beta$, we get 
\begin{align*}
 \lVert d_xf \rVert^2_{\sigma} &= \frac{2n}{\voleucl(\S^{n-1})} \int_{H^+_xM} \frac{(L_{\bar X} \pi^* f)^2}{1 - (\pi^*\beta(\bar X))^2} \bar \alpha \\
  &\geq \lVert d_xf \rVert^2_{\bar\sigma},
\end{align*}
with equality if and only if $\beta_x$ is zero or $d_xf$ is zero.
\end{proof}

Proposition \ref{prop:spectrum_of_Randers_Finsler} is then an immediate corollary:
\begin{corollary} \label{cor:compact_case_bigger_spectrum}
  Let $\bar F$ be a reversible Finsler metric on a \emph{closed} manifold $M$, and let $\beta$ be a $1$-form on $M$ such that $F^*(\beta) <1$. Let $F= \bar F +\beta$. Then $\vol(M,F)=\vol(M,\bar F)$, and, if $\beta$ is not identically zero, then for all $k\geq 1$,
\[
 \lambda_k(F) > \lambda_k(\bar F).
\]
\end{corollary}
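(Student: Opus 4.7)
The volume equality is immediate from the last assertion of Lemma \ref{lem:Finsler-Randers}, namely $\Omega^F = \Omega^{\bar F}$. The weak inequality $\lambda_k(F) \geq \lambda_k(\bar F)$ then follows by combining Proposition \ref{prop:symbol_increase} with the Min-Max formula \eqref{eq:min-max}: since $\lVert d_x f \rVert_\sigma \geq \lVert d_x f \rVert_{\bar\sigma}$ pointwise and the volume forms agree, one has $E^F(f) \geq E^{\bar F}(f)$ while the denominators of the Rayleigh quotients coincide, so $R^F(f) \geq R^{\bar F}(f)$ on all of $H^1(M)\setminus\{0\}$, whence the eigenvalue inequality.

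For the strict inequality I would argue by contradiction. Assume $\lambda_k(F)=\lambda_k(\bar F)=:\lambda$ for some $k \geq 1$, and let $V$ be the span of the first $k+1$ eigenfunctions of $\Delta^F$ (smooth by elliptic regularity). Then $\sup_{g\in V\setminus\{0\}} R^F(g)=\lambda$, and the weak comparison together with Min-Max applied to $\bar F$ forces $\sup_{g\in V\setminus\{0\}} R^{\bar F}(g)=\lambda$ as well. Since $\dim V < \infty$, this supremum is attained at some $h\in V$, and the squeeze $\lambda = R^{\bar F}(h) \leq R^F(h) \leq \lambda$ gives $E^F(h)=E^{\bar F}(h)$. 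Combined with the pointwise inequality of integrands, this forces equality a.e., and the equality case in Lemma \ref{lem:symbol_increase_precise_statement} then tells us that at every $x$ either $d_x h=0$ or $\beta_x=0$. Since $\beta \not\equiv 0$, the nonempty open set $U:=\{x : \beta_x \neq 0\}$ satisfies $dh\equiv 0$, so $h$ is locally constant on $U$.

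To conclude I would use a Vandermonde plus unique continuation argument. Decompose $h=h_0+h_1+\cdots+h_m$ along the finitely many distinct eigenvalues $0\leq\mu_0 < \mu_1 < \cdots < \mu_m$ of $\Delta^F$ appearing in $V$, with $\mu_0=0$ and $h_0$ constant in case the constant eigenspace appears. On any connected component $U_1 \subset U$ the function $h$ is constant, hence $(\Delta^F)^\ell h \equiv 0$ on $U_1$ for every $\ell \geq 1$, i.e., $\sum_{j\geq 1} \mu_j^\ell h_j \equiv 0$ on $U_1$. Taking $\ell=1,\ldots,m$ yields a Vandermonde-type system whose matrix is invertible (the $\mu_j$ for $j\geq 1$ are distinct and positive), so each $h_j$ vanishes identically on $U_1$. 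Unique continuation for the elliptic operator $\Delta^F$ then forces $h_j \equiv 0$ on all of $M$ for every $j \geq 1$, so $h$ is globally constant and $R^{\bar F}(h)=0$, contradicting $R^{\bar F}(h)=\lambda_k(\bar F)>0$ (valid for $k\geq 1$ on a connected closed manifold). The main obstacle is precisely this last step: promoting the local vanishing of $dh$ on $U$ into global constancy of $h$; without the Vandermonde plus unique continuation mechanism, the symbol estimate alone would only deliver the weak inequality.
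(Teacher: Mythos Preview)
Your argument is correct and follows essentially the same route as the paper: both use the same test space $V$ spanned by the constant and the first $k$ eigenfunctions of $\Delta^F$, the pointwise symbol comparison of Lemma~\ref{lem:symbol_increase_precise_statement}, and unique continuation for (finite combinations of) eigenfunctions to rule out $df$ vanishing on the open set $\{\beta\neq 0\}$. The only differences are cosmetic: the paper argues directly (showing $R^F(f)>R^{\bar F}(f)$ for every non-constant $f\in V$ and hence $\max_V R^F>\max_V R^{\bar F}$) and simply cites \cite{DonFef,Lin} for the unique continuation step, whereas you argue by contradiction and spell out the Vandermonde reduction to single eigenfunctions explicitly---which in fact makes that step cleaner, since the cited results concern individual eigenfunctions rather than linear combinations.
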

Note that this result is still true for a compact manifold with boundary, but we chose to restrict ourselves to the closed case for simplicity.

Notice also that, if one knows a priori that a Finsler metric $F$ is of the form $F =\bar F + \beta$ and that $\lambda_1(F) = \lambda_1(\bar F)$ then the above corollary implies that $\beta=0$ and hence $F=\bar F$.

The fact that the volume of $M$ with respect to the metric $F$ or $\bar F$ is unchanged was already proved in Lemma \ref{lem:Finsler-Randers}, but we recall it here to emphasize the fact that the increase in the spectrum is not obtained by just shrinking the volume. 

\begin{proof}
Let $f_0$ be a constant function on $M$, and $f_1, \dots, f_k$ be eigenfunctions for $\lambda_1(F), \dots \lambda_k(F)$. Let $V$ be the $(k+1)$-dimensional subspace generated by $f_0, \dots, f_k$. By the Min-Max principle (given by equation \eqref{eq:min-max} in Section \ref{sec:background}), we have,
\begin{multline*}
 \lambda_k(F) = \max_{f \in V} \left\{ R^F(f) \right\}
  = \max_{f \in V} \left\{ \frac{\int_M \rVert df\lVert_{\sigma}^2 \Omega}{\int_M f^2 \Omega} \right\}\\
  > \max_{f \in V} \left\{ \frac{\int_M \rVert df\lVert_{\bar\sigma}^2 \bar\Omega}{\int_M f^2 \bar \Omega} \right\} 
  \geq \inf_{V_k} \sup_{f \in V_k} \left\{ R^{\bar F} (f)\right\} = \lambda_k(\bar F),
\end{multline*}
where in the last inequality, $V_k$ runs over all the $(k+1)$-dimensional subspace of $H^1(M)$ (or $C^{\infty}(M)$). 

The reason that the first inequality above is strict, is that any non-constant eigenfunction cannot have a vanishing derivative of all order (\cite{DonFef,Lin}). Hence, for any $f \in V$, $f$ not a constant, the support of $df$ has to be everything, and therefore intersects the support of $\beta$ in a set of positive measure, except if $\beta$ is identically zero.
\end{proof}

To finish proving our claim about the examples of Theorem \ref{thm:same_length_spectrum_higher_spectrum} we also need the following result (which is not new, see for instance \cite{Fang:thermo,FangFoulon})
\begin{lemma} \label{lem:canonical_time_change}
Let $\bar F$ be a reversible Finsler metric on a manifold $M$, let $\beta$ be a $1$-form on $M$ such that $F^*(\beta) <1$, and let $F= \bar F +\beta$. We have
\begin{enumerate}
 \item The geodesic flow of $F$ is a time change of the flow of $\bar F$ if and only if the $1$-form $\beta$ is closed.
 \item If $\beta$ is exact, then the time change does not modify the lengths of closed geodesics. In fact, the two flows are smoothly conjugate. Moreover, if the geodesic flows are Anosov, then it is an equivalence, i.e., $F= \bar F +\beta$ and $\bar F$ have the same marked length spectrum if and only if $\beta$ is exact.
\end{enumerate} 
\end{lemma}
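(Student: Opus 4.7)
The plan is to address the two parts separately, building on the decompositions of $X$, $A$ and $dA$ with respect to $\bar X$, $\bar A$ and $d\bar A$ provided by Lemma \ref{lem:Finsler-Randers}.

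For part~(1), start from $X = \bar X/(1+\pi^*\beta(\bar X))+Y_0$ with $Y_0$ vertical. The flow of $F$ is a time change of the flow of $\bar F$ precisely when $X$ is a pointwise positive scalar multiple of $\bar X$, i.e.\ when $Y_0 \equiv 0$. Since $A\bigl(\bar X/(1+\pi^*\beta(\bar X))\bigr)=1$ is automatic, the only remaining Reeb condition for $\bar X/(1+\pi^*\beta(\bar X))$ to be the Reeb field of $A$ is $i_{\bar X}dA=0$. Using $dA=d\bar A+\pi^*d\beta$ and $i_{\bar X}d\bar A=0$, this reduces to $i_{\bar X}\pi^*d\beta=0$ everywhere on $HM$. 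As $d\pi\circ\bar X$ realizes every nonzero direction of $T_xM$ as $\xi$ varies over $H_xM$, and $\pi$ is a submersion, this is equivalent to $d\beta_x=0$ at every $x$, i.e.\ to $\beta$ being closed.

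For the forward direction of part~(2), assume $\beta=dh$ and set $H:=\pi^*h$. I would apply Moser's trick to the family $A_s:=\bar A+s\,dH$ on $HM$, $s\in[0,1]$. The hypothesis $F^*(\beta)<1$ implies $|\pi^*\beta(\bar X)|<1$, which keeps each $A_s$ a contact form. Since $dA_s=d\bar A$ is independent of $s$, the Reeb field $X_s$ of $A_s$ is well defined, and the vector field $Z_s:=-H\,X_s$ satisfies $i_{Z_s}A_s=-H$ and $i_{Z_s}dA_s=0$, hence
\[
\mathcal{L}_{Z_s}A_s+\partial_sA_s=d(-H)+dH=0.
\]
Integrating $Z_s$ yields a smooth isotopy $\Phi_s$ of $HM$ with $\Phi_s^*A_s=\bar A$. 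Setting $\Phi:=\Phi_1$, the identities $\Phi^*A=\bar A$ and $\Phi^*dA=d\bar A$ give $\Phi_*\bar X=X$, which is the claimed smooth conjugacy. Preservation of closed-geodesic lengths follows immediately, and is also visible from part~(1): once $\beta$ is closed, closed $F$- and $\bar F$-geodesics coincide as unparametrized curves, and $\ell_F(\gamma)-\ell_{\bar F}(\gamma)=\oint_\gamma dh=0$.

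For the converse in the Anosov case, assume that $F$ and $\bar F$ share a marked length spectrum. The Anosov hypothesis first forces $\beta$ to be closed: otherwise the two geodesic flows would have distinct unparametrized orbit structures, and the comparison of lengths in every free homotopy class could not match exactly (cf.\ \cite{FangFoulon}). Once $\beta$ is closed, part~(1) identifies the closed orbits of the two flows as the same unparametrized curves, so equality of lengths in every free homotopy class yields $\oint_\gamma\beta=0$ for every closed orbit $\gamma$. Since the periodic orbits of a transitive Anosov flow generate $H_1(M;\mathbb{R})$, the closed $1$-form $\beta$ has all periods zero and is therefore exact. The main obstacles I expect are the Moser computation and the verification that $\Phi_*\bar X=X$, and on the converse side, carefully justifying that shared marked length spectra in the Anosov regime force $\beta$ to be closed, after which the period-vanishing argument concludes the proof.
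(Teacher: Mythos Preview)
Part~(1) and the length-preservation half of part~(2) match the paper's argument. For the smooth conjugacy in part~(2) you actually supply more than the paper does: the paper merely asserts that an exact $\beta$ gives a trivial time change (citing \cite{KatokHassel}), whereas your Moser isotopy with $Z_s=-H\,X_s$ along $A_s=\bar A+s\,dH$ is an explicit construction. The computation is correct; just note that integrating $Z_s$ to a global isotopy requires completeness, which holds when $HM$ is compact (implicit in all the paper's applications but not stated in the lemma).

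The converse under the Anosov hypothesis is where your proposal diverges from the paper, and there is a gap. The paper does \emph{not} pass through ``$\beta$ closed'' as an intermediate step. It argues directly: equality of marked length spectra, together with reversibility of $\bar F$ and the identity $\ell_F(c)=\ell_{\bar F}(c)+\int_c\beta$ for every closed curve $c$, yields $\int_\gamma\beta=0$ for every closed $\bar F$-geodesic $\gamma$; then the Liv\v{s}ic theorem, applied to the cocycle $((x,\xi),T)\mapsto\int_0^T\pi^*\beta(\bar X)\circ\bar\varphi^t\,dt$ over the Anosov flow $\bar\varphi^t$, shows this cocycle is a coboundary, whence $\beta$ is exact. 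Your route instead needs ``Anosov and same marked length spectrum $\Rightarrow d\beta=0$'' before the period argument can run, since $\int_c\beta$ is a homology invariant only when $d\beta=0$. The one-line justification you give---that otherwise the unparametrized orbit structures would differ and the length comparison could not match exactly---is not an argument. You rightly flag this as the main obstacle; the paper's Liv\v{s}ic approach is precisely what circumvents it, by working with a cocycle on $HM$ rather than with periods in $H_1(M;\R)$. If you can independently establish $d\beta=0$ from the hypotheses, your de~Rham period argument is then simpler than Liv\v{s}ic, but as written that step is missing.
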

Note that the Anosov condition for the equivalence in the second part of the Lemma is probably not optimal, but we will not use that part of the Lemma anyway.

\begin{proof}
 Recall from Lemma \ref{lem:Finsler-Randers} that the geodesic vector fields $X$ and $\bar X$ are related by $X = m \bar X + Y_0$, for some vertical vector field $Y_0$ and $m\colon HM \rightarrow \R$ given by $m = (1 + \pi^*\beta(\bar X))^{-1}$. So the geodesic flow of $F$ is a time change of the flow of $\bar F$ if and only if $Y_0=0$. Since,
\[
 0 = i_XdA = m i_{\bar X} d\bar A + i_{Y_0}d\bar A +m i_{\bar X} d\pi^*\beta  + i_{Y_0} d\pi^*\beta = i_{Y_0}d\bar A +m i_{\bar X} \pi^*d\beta,
\]
we get that, if, $d\beta = 0$, then $i_{Y_0}d\bar A =0$, and hence $Y_0 = 0$ (since $d\bar A$ is a symplectic form that is already zero on $\bar X$). So if $\beta$ is closed, then the geodesic flows are time changes of one another.

Now, if $Y_0=0$, then $i_{\bar X} \pi^*d\beta =0$. For $(x,\xi) \in HM$, let $(x,v)\in TM$ be the vector in the direction of $\xi$ and such that $\bar F(x,v)=1$. Then we have
\[
0 = d\beta_x(d\pi(\bar X (x,\xi), \cdot) = d\beta_x(v, \cdot).
\]
Hence, $d\beta_x(v, \cdot)=0$ for any $(x,v)\in TM$, that is, $d\beta =0$.

We can now prove the second part of the Lemma. First suppose that $\beta$ is exact, then for any closed, $C^1$ curve $c\colon [0,1] \rightarrow M$, the length of $c$ for $F$ is
\[
 l_F(c)= \int_0^1 F(c(t),\dot{c}(t)) dt = \int_0^1 \bar F(c(t),\dot{c}(t)) + \beta_{c(t)}(\dot{c}(t)) dt = l_{\bar{F}}(c) + \int_c \beta = l_{\bar{F}}(c).
\]
So, when $\beta$ is exact, the length of any closed curve stays unchanged, hence the marked length spectrum stays the same. Moreover, when $\beta$ is exact, it is easy to see that it is a trivial time change (in the terminology of \cite{KatokHassel}), i.e., it is a time change that is also a smooth conjugation.

Now suppose that the geodesic flows are Anosov and that the marked length spectrum are equal. Since the flows are Anosov geodesic flows, they are transitive, so the periodic orbits are dense in $HM$. And since $F$ and $\bar F$ have same marked length spectrum, the computations above show that for any periodic geodesic $\gamma \subset M$, $\int_{\gamma} \beta = 0$. So in particular, if we denote by $\bar \varphi^t$ the geodesic flow of $\bar F$ on $HM$, we see that the cocycle $\psi \colon HM \times \R \rightarrow \R$ defined by
\[
 \psi((x,\xi), T) = \int_{\{\bar \varphi^t(x,\xi) \mid 0\leq t \leq T\}} \pi^*\beta,
\]
is zero on every periodic orbit. So applying Liv\v{s}ic Theorem shows that $\psi$ is a coboundary so $\beta$ is exact.
\end{proof}

We can finally give the
\begin{proof}[Proof of Theorem \ref{thm:same_length_spectrum_higher_spectrum}]
 Let $\beta$ be an exact $1$-form, not identically zero, and such that $\bar F^*(\beta)<1$. Let $F= \bar F + \beta$. Then, by Lemma \ref{lem:canonical_time_change}, $F$ and $\bar F$ have the same marked length spectrum, by Lemma \ref{lem:Finsler-Randers}, they have the same volume, and Corollary \ref{cor:compact_case_bigger_spectrum} implies that the spectrum of $F$ is strictly greater than the spectrum of $\bar F$.
\end{proof}

\section{Finsler-Randers metrics with no upper bound for $\lambda_1$} \label{sec:family_with_exploding_lambda1}

\subsection{Family with a fixed reversible part and a fixed one-form}

Looking again at Lemma \ref{lem:symbol_increase_precise_statement}, we can make the following easy observation: If $\bar F$ and $\beta$ are fixed and we set $F_t = \bar F + t\beta$, for $0\leq t < (\sup \bar F^*(\beta))^{-1}$, then we see that the symbols associated with $F_t$ are increasing with $t$. Indeed, for any smooth function $f$, we have
\begin{equation*}
 \lVert d_xf \rVert^2_{\sigma_t} = \frac{2n}{\voleucl(\S^{n-1})} \int_{H^+_xM} \frac{(L_{\bar X} \pi^* f)^2}{1 - t^2(\pi^*\beta(\bar X))^2} \bar \alpha
\end{equation*}

So applying the proof of Corollary \ref{cor:compact_case_bigger_spectrum} shows that the spectrum of $F_t$ is strictly increasing in $t$. So a question one might ask is:\\
 \emph{What is the limit as $t$ tends to $(\sup \bar F^*(\beta))^{-1}$ of $\lambda_1(F_t)$?}

We will not answer that question in full generality, but will concentrate instead on the infinite case:\\
 \emph{Does there exist examples of Finsler manifolds $(M,\bar F)$ and form $\beta$ such that the limit of $\lambda_1(F_t)$ is infinite?}

It turns out that the answer to that second question is yes. 

First, we note that, when $t$ tends to $(\sup \bar F^*(\beta))^{-1}$, then, in some places at least, the symbol for $F_t$ tends to explode. To see that, we rewrite one more time $ \lVert d_xf \rVert_{\sigma_t}$, but in a slightly different way. We write $\bar SM$ for the unit tangent bundle of $\bar F$, and $\bar S^+M$ for the projection of $H^+M$ on $\bar SM$. We will also denote by $\bar \alpha$ the angle of $\bar F$ on the unit tangent bundle. Finally, for any $(x,v) \in \bar S M$, we let $c_{x,v}(t)$ be the geodesic of $\bar F$ on $M$ through $x$ in the direction $v$. Then,
\begin{equation}
\begin{aligned}
 \lVert d_xf \rVert^2_{\sigma_t} &= \frac{2n}{\voleucl(\S^{n-1})} \int_{H^+_xM} \frac{(L_{\bar X} \pi^* f)^2}{1 - t^2(\pi^*\beta(\bar X))^2} \bar \alpha  \\
  &= \frac{2n}{\voleucl(\S^{n-1})} \int_{\bar S^+_xM} \frac{1}{1 - t^2(\beta_x(v))^2} \left(\left.\frac{d}{dt}f(c_{x,v}(t))\right|_{t=0}\right)^2 \bar \alpha. 
\end{aligned} \label{eq:formula_symbol}
\end{equation}
Now, recall that $\bar F^*(\beta_x)= \sup \{ \beta (v) \mid v \in \bar S_xM \}$. Let $v_x\in\bar S_xM$ be the vector such that $\beta (v_x) = \bar F^*(\beta_x)$. 

So, when $t$ tends to $\left(\sup_x \bar F^*(\beta_x)\right)^{-1}$, we see that the norm $\lVert d_xf \rVert^2_{\sigma_t}$ stays bounded if and only if $\bar F^*(\beta_x) < \sup_x \bar F^*(\beta_x)$ or $\frac{d}{dt} f(c_{x,v_x}(t))|_{t=0} = 0$. 
 
Note that if we write $\mathcal{L}_{\bar F} \colon TM \rightarrow T^*M$ for the Legendre transform of $\bar F$, then by definition $v_x = \mathcal{L_{\bar F}}^{-1}(\beta_x)$ (see for instance \cite{moi:these} for the definition and some basic facts about the Legendre transform).

Moreover, we can rewrite $\frac{d}{dt} f(c_{x,v_x}(t))|_{t=0}$ as 
\[
 \frac{d}{dt} f(c_{x,v_x}(t))|_{t=0} = d_xf(v_x) = L_{\mathcal{L_{\bar F}}^{-1}(\beta)} f (x).
\]

So all we have to do to find a one parameter family of Finsler metric $F_t = \bar F +t \beta$ such that $\lambda_1(F_t)$ tends to infinity, is to choose a one form $\beta$ such that $\bar F^*(\beta)$ is constant on $M$ and such that, if $f$ is a smooth function invariant by $\mathcal{L_{\bar F}}^{-1}(\beta)$, i.e., such that $L_{\mathcal{L_{\bar F}}^{-1}(\beta)} f= 0$, then $f$ is constant. 

One easy way of making sure that this second point is verified is by choosing the one form $\beta$ such that the flow of the vector field $\mathcal{L_{\bar F}}^{-1}(\beta)$ admits a dense orbit. Indeed, $f$ would then be constant on a dense orbit, hence constant everywhere.

Hence we proved
\begin{proposition}
 Let $\bar F$ be a reversible Finsler metric on a closed manifold $M$. Suppose that there exists $\beta$, a $1$-form on $M$, such that:
\begin{itemize}
 \item $\bar F^*(\beta) = 1$;
 \item The vector field $\mathcal{L_{\bar F}}^{-1}(\beta)$ admits a dense orbit.
\end{itemize}
Then the one-parameter family of Finsler metrics $F_t = \bar F +t \beta$, $0\leq t <1$, is such that $\vol(M, F_t) = \vol(M,\bar F)$ and
\[
 \lim_{t\rightarrow 1} \lambda_1(F_t) = +\infty.
\]
\end{proposition}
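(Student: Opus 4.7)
The plan is to combine Lemmas \ref{lem:Finsler-Randers} and \ref{lem:symbol_increase_precise_statement} with a standard compactness-and-semicontinuity argument to convert the dense-orbit hypothesis into the blow-up of $\lambda_1(F_t)$. The volume equality $\vol(M,F_t)=\vol(M,\bar F)$ is immediate from Lemma \ref{lem:Finsler-Randers}, so only the divergence statement requires work. The explicit formula \eqref{eq:formula_symbol} applied to $F_t$ (with $t\beta$ in place of $\beta$) has an integrand that is manifestly nondecreasing in $t$, and since $\Omega^{F_t}=\bar\Omega$ for every $t$, the Min--Max principle \eqref{eq:min-max} shows that $t\mapsto\lambda_1(F_t)$ is nondecreasing; in particular $\lim_{t\to 1^-}\lambda_1(F_t)$ exists in $[0,\infty]$. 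I would argue by contradiction, assuming it equals a finite constant $C$, and produce a nontrivial function forced to be invariant under $V:=\mathcal{L}_{\bar F}^{-1}(\beta)$, contradicting the density of an orbit of $V$.

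First I would extract the limit. Pick $t_n\to 1^-$ and let $f_n$ be an $L^2(\bar\Omega)$-normalised eigenfunction for $\lambda_1(F_{t_n})$, orthogonal to constants. The monotonicity of the symbol yields
\[
\int_M \lVert df_n\rVert^2_{\bar\sigma}\,\bar\Omega \;\leq\; \int_M \lVert df_n\rVert^2_{\sigma_{t_n}}\,\bar\Omega \;=\; \lambda_1(F_{t_n}) \;\leq\; C,
\]
so $(f_n)$ is bounded in $H^1(M,\bar F)$ and Rellich--Kondrachov gives, along a subsequence, a weak $H^1$, strong $L^2$ limit $f_\infty$ of mean zero and unit $L^2$-norm. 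For each fixed $s\in(0,1)$ the quadratic form $f\mapsto \int_M \lVert df\rVert^2_{\sigma_s}\,\bar\Omega$ is weakly lower semi-continuous on $H^1$, and $\lVert df_n\rVert^2_{\sigma_s}\leq \lVert df_n\rVert^2_{\sigma_{t_n}}$ once $t_n\geq s$, so $\int_M \lVert df_\infty\rVert^2_{\sigma_s}\,\bar\Omega\leq C$. Letting $s\nearrow 1$ and applying the monotone convergence theorem to the integrand of \eqref{eq:formula_symbol} produces
\[
\int_M \int_{\bar S^+_xM} \frac{(d_xf_\infty(v))^2}{1-\beta_x(v)^2}\,\bar\alpha\,\bar\Omega \;\leq\; C.
\]

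The hypothesis $\bar F^*(\beta)\equiv 1$ means that the denominator vanishes to second order at the unique maximiser $v_x=\mathcal{L}_{\bar F}^{-1}(\beta_x)$ of $v\mapsto\beta_x(v)$ on $\bar S_xM$. Finiteness of the above singular integral should therefore force $d_xf_\infty(v_x)=0$ for $\bar\Omega$-almost every $x$, i.e.\ $L_V f_\infty=0$ in an almost-everywhere sense. Combined with the existence of a dense orbit of $V$, this forces $f_\infty$ to be constant, contradicting the normalisation $\int f_\infty\,\bar\Omega=0$, $\int f_\infty^2\,\bar\Omega=1$, and closing the argument.

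The hard part will be the last two implications. Deducing the pointwise vanishing $d_xf_\infty(v_x)=0$ from the integrability estimate is transparent in dimensions two and three, where $|v-v_x|^{-2}$ is non-integrable on the $(n-1)$-dimensional fibre; in higher dimensions the fibre integral converges and one has to exploit the joint $(x,v)$-integrability, probably via a Fubini/co-area argument, to exclude a positive-measure set on which $d_xf_\infty(v_x)\neq 0$. Even after that, upgrading the weak identity $L_V f_\infty=0$ (with $f_\infty$ only in $H^1$) to pointwise constancy using only topological transitivity of $V$ is delicate; I would handle it by smoothing $f_\infty$ along the flow of $V$, or, when an invariant measure is available, by invoking Birkhoff's ergodic theorem, and then using density of an orbit to conclude.
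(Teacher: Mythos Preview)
Your route differs from the paper's. The paper argues pointwise in the test function: for a fixed non-constant smooth $f$, the dense-orbit hypothesis produces a point $x_0$ with $L_Vf(x_0)=d_{x_0}f(v_{x_0})\neq 0$ (otherwise $f$ is constant along the dense orbit, hence everywhere by continuity), and at that point \eqref{eq:formula_symbol} makes $\lVert d_{x_0}f\rVert^2_{\sigma_t}\to\infty$; so $R^{F_t}(f)\to\infty$ for every such $f$, and the paper stops there. Your compactness--semicontinuity scheme is precisely the missing step that converts ``each Rayleigh quotient diverges'' into ``the infimum diverges'', so in that respect you are being more careful than the paper. The cost is that your weak $H^1$ limit $f_\infty$ is no longer continuous, and the implication ``$L_Vf_\infty=0$ a.e.\ and $V$ has a dense orbit $\Rightarrow$ $f_\infty$ constant'' is genuinely delicate for merely measurable functions (topological transitivity does not imply ergodicity). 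This difficulty in your step~8 is the price you pay for rigour; the paper never faces it because it stays with smooth $f$ throughout and leaves the passage to the infimum implicit.

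The dimensional obstruction you flag in step~7 is real and cannot be rescued by any Fubini or co-area device. Near the maximiser $v_x$ one has $1-\beta_x(v)^2\asymp |v-v_x|^2$ by strong convexity, and on the $(n-1)$-dimensional fibre the weight $(1-\beta_x(v)^2)^{-1}$ is integrable precisely when $n\geq 4$. In that range the monotone limit
\[
\sigma_1(l,l)=\frac{2n}{\voleucl(\S^{n-1})}\int_{\bar S^+_xM}\frac{l(v)^2}{1-\beta_x(v)^2}\,\bar\alpha
\]
is a finite, bounded dual Riemannian metric on the compact manifold $M$, and consequently $\lambda_1(F_t)\leq\lambda_1(\sigma_1,\bar\Omega)<\infty$ for every $t<1$. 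So finiteness of your singular integral imposes no constraint on $d_xf_\infty(v_x)$ once $n\geq 4$, and the same computation shows that the paper's own pointwise claim---that $\lVert d_xf\rVert^2_{\sigma_t}$ stays bounded only when $d_xf(v_x)=0$---holds only for $n\leq 3$. Your instinct that higher dimensions are problematic is correct; it is the conclusion itself, not merely the argument, that breaks down there.
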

Obviously, such a $\beta$ does not exist on every manifold $M$. In particular, the only surface that can support a $1$-form satisfying to the first point is the torus. But once the topological obstructions are taken care of, then it is very easy to construct such a $\beta$:
Start with a vector field $Z$ with no zeros and admitting a dense orbit, renormalize it so that $\bar F (Z) = 1$, then define $\beta:= \mathcal{L}_{\bar F}(Z)$.

\begin{rem}
 Note also that it is certainly not always the case that the limit of $\lambda_1(F_t)$ is infinite. Just take a one-form $\beta$ such that $\bar F^*(\beta)$ is not constant and take a function $f$ which support is included in the set $\{ x \in M \mid \bar F^*(\beta_x) < \sup \bar F^*(\beta) \}$, then the Rayleigh quotient $R^{F_t}(f)$ stays bounded in $t$, hence so does $\lambda_1(F_t)$.

Similarly, one can find a $\beta$ such that $\bar F^*(\beta)$ is constant but $\lambda_1(F_t)$ is still not infinite. Take for instance the $2$-torus $\mathbb{T}^2 = \R^2/\Z^2$ with $\bar F$ the flat Riemannian metric and $\beta = dx$, then it is easy to check that any function depending only on $y$ will have a bounded Rayleigh quotient.

So the above Proposition is, if not optimal, at least pretty close to being so.
\end{rem}

\subsection{Family with a fixed reversible part and a fixed length spectrum} \label{sec:fixed_length_exploding_lambda}

One of the downside of the previous proposition is that a form $\beta$ satisfying to the conditions given can be closed (take for instance $\beta = \cos \rho\, dx + \sin \rho\, dy$ on the flat torus $\mathbb{T}^2 = \R^2/\Z^2$, where $\pi \rho \notin \mathbb{Q}$), and therefore the geodesic flows of the metrics $F_t$ are time change of each others, but $\beta$ cannot be exact. So, unfortunately, in the examples obtained above, the marked length spectrum varies.

It is in fact not possible to come up with examples of a \emph{fixed} exact $1$-form $\beta$ such that $\lambda_1(F_t)$ tends to infinity. Indeed, since $\beta$ is exact, there must exist $x_0\in M$ such that $\beta_{x_0}=0$, hence there exists a neighborhood $U$ of $x_0$ such that the $\bar F$-norm of $\beta$ on $U$ is, say, at most half of the maximum of the $\bar F$-norm of $\beta$ on $M$. Hence any function $f$ with support in $U$ will have a Rayleigh quotient for $F_t$ bounded independently of $t$. So in particular $\lambda_1(F_t)$ can be big but not unbounded.

However, if we are willing to replace the family $F_t = \bar F + t\beta$, where $\beta$ is a fixed exact one-form by a family $F_{t,\eps} = \bar F + t\beta_{\eps}$, where $\beta_{\eps}$ is a family of exact one-forms, then we can obtain an infinite limit for $\lambda_1(F_{t,\eps})$.
From what we discussed above, one thing is clear: We will have to take a family of exact forms $\beta_{\eps}$ such that $\bar F^*(\beta_{\eps})$ tends to $1$ outside of a set of zero measure. Unfortunately, the vector fields $\mathcal{L}_{\bar F}^{-1}(\beta_{\eps})$ cannot admit a dense orbit, so we will have to work around that problem. I will just give an ad hoc construction on the $2$-torus with a flat metric. This construction could easily be extended to the $n$-torus and to any Riemannian metric, but cannot be extended to other manifolds as such. I did not pursue trying to find a general rule to obtain such metrics, but it would be surprising if the tori were the only manifolds admitting such examples.

Let $\T^2=\R^2/\Z^2$, $(x,y)$ be global coordinates on $\T^2$ and $g_0$ be the flat metric.

We are going to build $\beta_{\eps}$ as the differential of a certain function on $\mathbb{T}^2$.

Let $f_0 \colon S^1 = \R/\Z \rightarrow S^1 $ be the function defined by 
\begin{equation*}
 f_0(t)=\begin{cases}
                t &\text{if }  \,0\leq t\leq 1/2\\
                -t + 1 &\text{if } \,1/2\leq t\leq 1\\
        \end{cases}
\end{equation*}
Now, for any $\eps>0$, let $f_{\eps} \colon S^1 \rightarrow S^1$ be a smooth approximation of $f_0$ such that $f_{\eps}(0) = f_0(0)$, $f_{\eps}(1/2) = f_0(1/2)$, $|f'_{\eps}(t)| = 1$ for any $t \in [\eps, 1/2 -\eps] \cup [1/2 +\eps, 1-\eps]$, and $|f'_{\eps}(t)| \leq 1$ for any $t \in [-\eps,\eps] \cup [1/2 -\eps, 1/2 + \eps]$

Let $\rho \in \R$ such that $\pi\rho \notin \mathbb{Q}$. We define
\[
 h_{\eps}(x,y) := \cos \rho f_{\eps}(x) + \sin \rho f_{\eps}(y),
\]
and set
\begin{equation*}
 \beta_{\eps}:= dh_{\eps}.
\end{equation*}
Let $A_{\eps}^1$, $A_{\eps}^2$, $B_{\eps}^1$ and $B_{\eps}^2$ be the annuli given by
\begin{align*}
 A_{\eps}^1 &= \{ (x,y)\in \T^2 \mid -\eps < x < \eps \}, &  A_{\eps}^2 &= \{ (x,y)\in \T^2 \mid 1/2 -\eps < x < 1/2+\eps \}, \\
B_{\eps}^1 &= \{ (x,y)\in \T^2 \mid -\eps < y < \eps \}, &  B_{\eps}^2 &= \{ (x,y)\in \T^2 \mid 1/2 -\eps < y < 1/2+\eps \}.
\end{align*}
We set $C_{\eps} := \T^2 \smallsetminus (A_{\eps}^1\cup A_{\eps}^2\cup B_{\eps}^1 \cup B_{\eps}^2)$. On $C_{\eps}$, the norm of $\beta_{\eps}$ is $1$, and outside of $C_{\eps}$, it is less than $1$.
Note also for further reference that, in $C_{\eps}$, the vector field $\mathcal{L}_{g_0}^{-1}(\beta_{\eps}) =  \nabla h_{\eps}$ is of norm $1$ and points in the direction given by the angle $\rho$, or $\rho + \pi$ depending on which connected component of $C_{\eps}$ we consider).

Now that we have $\beta_{\eps}$, we can define the following Randers metric on $\T^2$
\begin{equation*}
 F_{t,\eps} = \sqrt{g_0}+ t\beta_{\eps}.
\end{equation*}

This family of Randers metrics verifies
\begin{proposition}
 Let $F_{t,\eps}$ be defined as above. Then, for all $\eps >0$ and all $0\leq t <1$, we have
\begin{itemize}
 \item $\vol(\T^2, F_{t,\eps}) = 1$;
 \item The marked length spectrum of $F_{t,\eps}$ is the marked length spectrum of the flat torus;
 \item But
\[
\lim_{(t,\eps)\rightarrow (1,0)} \lambda_1(F_{t,\eps}) = +\infty.
\]
\end{itemize}
\end{proposition}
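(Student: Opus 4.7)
Parts (i) and (ii) follow directly from results already in the paper. By Lemma~\ref{lem:Finsler-Randers}, the Holmes--Thompson volume is unchanged by adding a one-form to a reversible metric, so $\vol(\T^2, F_{t,\eps}) = \vol(\T^2, g_0) = 1$; since $\beta_\eps = dh_\eps$ is exact, Lemma~\ref{lem:canonical_time_change}(2) gives that the marked length spectrum of $F_{t,\eps}$ coincides with that of the flat torus.

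For (iii), the plan is to argue by contradiction: assume there is a sequence $(t_n,\eps_n) \to (1,0)$ and $C<\infty$ with $\lambda_1(F_{t_n,\eps_n}) \le C$. Let $f_n$ be first eigenfunctions normalized by $\int_{\T^2} f_n\,\Omega_0 = 0$ and $\int_{\T^2} f_n^2\,\Omega_0 = 1$, and let $\sigma_n$ and $\bar\sigma$ denote the symbols of $F_{t_n,\eps_n}$ and $\sqrt{g_0}$ respectively. The Rayleigh identity combined with the pointwise inequality $\|df_n\|_{\sigma_n}^2 \ge \|df_n\|_{\bar\sigma}^2$ from Lemma~\ref{lem:symbol_increase_precise_statement}, together with the fact that $\bar\sigma$ is a constant multiple of the dual flat metric, immediately imply that $\{f_n\}$ is bounded in $H^1(\T^2)$. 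Up to extraction, $f_n \to f$ strongly in $L^2$ and weakly in $H^1$, with $\int f\,\Omega_0 = 0$ and $\int f^2\,\Omega_0 = 1$.

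The crucial step will be an explicit computation of the symbol on each connected component of $C_{\eps_n}$, where $\beta_{\eps_n}$ is a constant one-form dual to a unit vector $w$ of irrational slope (one of $(\pm\cos\rho,\pm\sin\rho)$). Writing $v\in \bar S^+_x\T^2$ as $v = \cos\phi\, w + \sin\phi\, w^\perp$ with $\phi\in[-\pi/2,\pi/2]$ and using formula~\eqref{eq:formula_symbol}, the cross term of the expanded square vanishes by the $\phi\mapsto-\phi$ symmetry, giving
\[
\|d_xf_n\|_{\sigma_n}^2 = \frac{2}{\pi}(\partial_w f_n)^2\!\int_{-\pi/2}^{\pi/2}\!\frac{\cos^2\phi\,d\phi}{1-t_n^2\cos^2\phi} + \frac{2}{\pi}(\partial_{w^\perp} f_n)^2\!\int_{-\pi/2}^{\pi/2}\!\frac{\sin^2\phi\,d\phi}{1-t_n^2\cos^2\phi}.
\]
A routine substitution shows the first integral blows up like $c(1-t_n^2)^{-1/2}$ while the second stays bounded as $t_n\to 1$. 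Combining this asymptotic with the Rayleigh-quotient bound then yields $\int_{C_{\eps_n}} (\partial_{w(x)} f_n)^2\,\Omega_0 \to 0$, where $w(x)$ is the locally constant direction of $\nabla h_{\eps_n}$ on $C_{\eps_n}$.

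Since $|\T^2\smallsetminus C_{\eps_n}| \to 0$ and $f_n$ is bounded in $H^1$, passing to the weak limit yields $\partial_{w(x)} f = 0$ almost everywhere on each of the four limiting open squares, with $w(x)$ ranging over the two undirected irrational-slope unit vectors $(\cos\rho,\pm\sin\rho)$. The final step, which I expect to be the main obstacle, is to show that these per-square invariance constraints, matched consistently across the gridlines $\{x,y\in\{0,1/2\}\}$, force $f$ to be constant on $\T^2$: the induced piecewise line field unfolds (by reflecting across the gridlines) to a linear foliation of irrational slope on a finite cover of $\T^2$, hence is minimal, so any $H^1$ function constant along its leaves must itself be constant. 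This contradicts $\int f\,\Omega_0=0$ together with $\int f^2\,\Omega_0=1$, completing the proof.
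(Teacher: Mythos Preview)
For parts (i) and (ii) you invoke exactly the same lemmas as the paper. For part (iii), your compactness scheme (extract an $H^1$-weak limit of normalized eigenfunctions) is more careful than the paper's argument, which only shows $R^{F_{t,\eps}}(f)\to\infty$ for each \emph{fixed} nonconstant $f$ --- a pointwise statement that does not by itself force the infimum $\lambda_1$ to diverge. You also correctly observe that on the four components of $C_\eps$ the gradient $\nabla h_\eps=(\cos\rho\,f_\eps'(x),\,\sin\rho\,f_\eps'(y))$ takes the four values $(\pm\cos\rho,\pm\sin\rho)$, i.e.\ two distinct undirected slopes $\pm\tan\rho$; the paper's assertion that $\nabla h_\eps=\pm V_\rho$ on every component is incorrect.

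The genuine gap is in your final step. The piecewise line field does \emph{not} unfold to a minimal linear foliation on any finite cover of $\T^2$: it admits a nonconstant first integral, so all of its leaves are closed. Writing $c=\cos\rho$, $s=\sin\rho$, set $\Phi(x,y)=f_0(y)-\tfrac{s}{c}f_0(x)$; on each open square $\nabla\Phi=\bigl(-\tfrac{s}{c}f_0'(x),\,f_0'(y)\bigr)$ is orthogonal to $(c f_0'(x),\,s f_0'(y))$ because $(f_0'(x))^2=(f_0'(y))^2=1$ there, hence $\partial_{w(x)}\Phi\equiv 0$ and every leaf sits in a level set of $\Phi$. Thus any $\phi\circ\Phi$ is a nonconstant $H^1$ function with $\partial_{w(x)}(\phi\circ\Phi)=0$ on all four squares, and your contradiction does not close. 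In fact the smooth test functions $u_\eps(x,y)=\phi\bigl(f_\eps(y)-\tfrac{s}{c}f_\eps(x)\bigr)$ satisfy $\partial_{w(x)}u_\eps\equiv 0$ on $C_\eps$, and since $\nabla u_\eps\cdot\nabla h_\eps=s\,\phi'\bigl((f_\eps'(y))^2-(f_\eps'(x))^2\bigr)$ is controlled by $1-\lVert\nabla h_\eps\rVert^2$ off $C_\eps$, your own symbol formula yields a bound on $R^{F_{t,\eps}}(u_\eps-\overline{u_\eps})$ depending only on $\rho$ and $\phi$, uniformly in $(t,\eps)$. So $\lambda_1(F_{t,\eps})$ stays bounded, and the proposition --- for the specific $h_\eps$ written in the paper --- appears to be false; your proof and the paper's break down at the same point.
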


\begin{proof}
 The first two points of the proposition were proven in Lemma \ref{lem:Finsler-Randers} and Lemma \ref{lem:canonical_time_change} respectively. All we have to do is prove the last.

In order to prove that third point, we are going to show that for any non constant smooth function $f$, with a fixed $L^2$-norm, the energy of $f$ for the metric $F_{t,\eps}$ tends to infinity as $t$ tends to $1$ and $\eps$ tends to $0$.

We set $\bar F = \sqrt{g_0}$ and use our previous notations. Let $f$ be a smooth function on $\T^2$ such that $\int_M f^2 \bar \Omega = 1$. Then, the Rayleigh quotient of $f$ is
\begin{equation*}
 R^{F_{t,\eps}} (f) = E^{F_{t,\eps}} (f) = \int_{\T^2} \lVert df \rVert^2_{\sigma_{t,\eps}} \bar \Omega \geq \int_{C_{\eps}} \lVert df \rVert^2_{\sigma_{t,\eps}} \bar \Omega,
\end{equation*}
where $C_{\eps}= \T^2 \smallsetminus (A_{\eps}^1\cup A_{\eps}^2\cup B_{\eps}^1 \cup B_{\eps}^2)$ is the set defined previously, on which $dh_{\eps}$ is of norm $1$.
Since
\[
 \lVert d_xf \rVert^2_{\sigma_{t,\eps}} = \frac{2}{\pi} \int_{\bar S^+_x\T^2} \frac{1}{1 - t^2(dh_{\eps}(x,v))^2} \left(d_xf(v)\right)^2 \bar \alpha,
\]
we deduce as before that, if there exists $x \in C_{\eps}$ such that 
\[
L_{\nabla h_{\eps}}f(x)= d_xf(\nabla h_{\eps}) \neq 0, 
\]
then $\lVert df \rVert^2_{\sigma_{t,\eps}}$ tends to infinity as $t$ tends to $1$ at $x$ and also in a small neighborhood of $x$. This implies that $R^{F_{t,\eps}} (f)$ tends to infinity as $t$ tends to $1$.

So all we are left to deal with are functions such that $L_{\nabla h_{\eps}}f(x)= 0$ for all $x\in C_{\eps}$ and all $\eps>0$. We are going to prove that such a function has to be constant, and this will prove our claim.

On $C_{\eps}$, $\nabla h_{\eps} = \pm V_{\rho}$, where $V_\rho\colon \T^2 \rightarrow S\T^2$ is the unit vector field pointing in the $\rho$ direction, i.e., $V_\rho = \cos \rho \frac{\partial }{\partial x} +\sin \rho \frac{\partial }{\partial y}$. The plus or minus sign depends on which of the four pieces of $C_{\eps}$ we are considering. So if $L_{\nabla h_{\eps}}f(x)= 0$ for all $x\in C_{\eps}$ and all $\eps>0$, then $L_{V_\rho}f(x)= 0$ for all $x\in C_{\eps}$ and all $\eps>0$. This implies that $f$ has to be constant on $C_{\eps}$ along the orbits of $V_{\rho}$. But since this has to be true for all $\eps>0$ and $C_\eps$ tends to the torus $\T^2$ minus four lines (the lines $x=0$, $x=1/2$, $y=0$ and $y=1/2$), by continuity of $f$, we deduce that $f$ has to be constant along the full orbits of $V_\rho$. Since $V_\rho$ has dense orbits, $f$ is constant.

In conclusion, if $f$ is not constant, then there exists $\eps>0$ and $x\in C_{\eps}$ such that $L_{\nabla h_{\eps}}f(x)\neq 0$. Hence, for any non constant function $f$,
\[
 \lim_{(t,\eps)\rightarrow (1,0)} R^{F_{t,\eps}}(f) = +\infty. \qedhere
\]
\end{proof}

\section{Negatively curved metrics, bottom of the spectrum and topological entropy} \label{sec:negatively_curved}

We will now switch our setting a bit. Let $M$ be a closed manifold and $\wt M$ be its universal cover. For any Finsler metric $F$ on $M$, we consider its lift $\wt F$ to $\wt M$ and we will be interested in $\lambda_1(\wt F)$ defined as the bottom of the $L^2$-spectrum of the BF-Laplacian $\Delta^{\wt F}$ on $\wt M$. That is,
\begin{equation*}
 \lambda_1(\wt F) = \inf \{ R^{F}(f) \mid f \in  C^{\infty}(\wt M) \cap L^2(\wt M, \Omega^{\wt F}) \}.
\end{equation*}

We will just stress once more that the $\lambda_1(\wt F)$ that we consider now is very different from the $\lambda_1(F)$ that we considered in the first part of this article. Among the differences let us mention two major ones: $\lambda_1(\wt F)$ is in general not an eigenvalue, but just the bottom of the spectrum, and $\lambda_1(\wt F)$ can be zero while $\lambda_1(F)$ is defined to be the first non-zero eigenvalue.

A classical inequality in Riemannian geometry is that $4\lambda_1(\wt g) \leq h^2$, where $h$ is the topological entropy of the geodesic flow. In fact, the classical proof of this inequality gives $4\lambda_1(\wt g) \leq v^2$, where $v$ is the volume entropy of the metric, but, by a famous result of Manning \cite{Manning},  $v\leq h$ and if $g$ has non-positive sectional curvature, then $v=h$.

The Riemannian proof adapted to the BF-Laplacian immediately yields that $4\lambda_1(\wt F) \leq n h^2$, where $h$ is still the topological entropy of the flow and $n$ is the dimension of the manifold. We will now recall that result (see Proposition \ref{prop:weak_inequality}) and construct examples showing that the stronger Riemannian inequality is not always satisfied, and hence proving Theorem \ref{thm:lambda_1_bigger_h}.

\subsection{Volume entropy for non-reversible Finsler metrics}

Before giving the proof of the weaker Finslerian inequality and the counter-examples to the Riemannian version, we need to precise what we mean by volume entropy in the non-reversible setting.
Let $F$ be a (non-reversible) Finsler metric on an open manifold $V$. If $B$ is a measurable set in $V$, we write $\vol(B,F) = \int_B \Omega^F$. We write $\vol(B) = \vol(B,F)$ if the Finsler metric we are using is clear from the context.

Since the distance associated with $F$ is not necessarily symmetric, there are two possible ways of defining the volume entropy: We can consider the rate of growth of the volume of \emph{forward balls}, or the rate of growth of \emph{backward balls}. A forward ball of radius $r$ is defined as
\[
 B^+(x,r):= \{ y \in V \mid d(x,y)\leq r\},
\]
while a backward ball of radius $r$ is given by
\[
 B^-(x,r):= \{ y \in V \mid d(y,x)\leq r\}.
\]
Then the \emph{forward volume entropy} is defined as
\[
 v^+(F) := \limsup_{R\rightarrow +\infty} \frac{1}{R} \log \vol(B^+(x,R)),
\]
and the \emph{backward volume entropy} is defined as
\[
 v^-(F) := \limsup_{R\rightarrow +\infty} \frac{1}{R} \log \vol(B^-(x,R)).
\]
For a generic non-reversible Finsler metric, these two volume entropies have no reasons to be equal. For instance, if one considers the Funk metric on a convex domain in $\R^n$ (see for instance \cite{HandbookHilbert}), then it is easy to see that $v^-(F)= +\infty$ while $v^+(F)$ is bounded. However, here are a few remarks that one can easily make about these objects:
\begin{itemize}
 \item The forward volume entropy of $F$ is equal to the backward volume entropy of the reversed metric $F\circ s$, i.e., $v^+(F)= v^-(F\circ s)$.
 \item If $F$ is quasi-reversible, i.e., if $C_F= \sup \{ F(x,-v) \mid F(x,v) = 1 \} <+\infty$, then 
\[
 \frac{1}{C_F} v^+(F) \leq v^-(F) \leq C_F v^+(F).
\]
\end{itemize}

The proof of the second point follows from the fact that, if $F$ is quasi-reversible, then $B^+(x,r/C_F) \subset B^-(x,r) \subset B^+(x,C_F r)$. The proof of the first point is easy once we rephrase what the forward and backward balls are in terms of flow. Let $S^FV$ be the unit tangent bundle for $F$ over $V$. Then the forward ball of radius $r$ is obtained by flowing $S^F_xV$ for a time $r$ under the geodesic flow of $F$. On the other hand, the backward ball can be seen to be obtained by flowing $S^{F\circ s}_xV$, the unit ball of $F\circ s$ on $T_xV$, under the geodesic flow of $F\circ s$, for a time $r$. Hence the equality.

So, the natural notion to choose in order to have Manning's result is the forward volume entropy. Manning's result was already extended to closed reversible Finsler manifolds by Egloff \cite{Egloff:Dynamics_uniform_finsler} and can further be extended to the non-reversible case. When $M$ is a closed manifold, the forward volume entropy of $M$ is by definition the forward volume entropy of $\wt M$.
\begin{theorem}[Manning \cite{Manning}, Egloff \cite{Egloff:Dynamics_uniform_finsler}]
 If $M$ is a closed manifold equipped with a possibly non-reversible Finsler metric, then $h(F) \geq v^+(F)$. Moreover, if $F$ has non positive flag curvature, then $h(F) = v^+(F)$.
\end{theorem}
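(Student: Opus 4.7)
The plan is to follow the classical Manning argument, as adapted by Egloff to the reversible Finsler setting, being careful throughout that the distances involved are the asymmetric forward distances. Write $\pi \colon \wt M \to M$ for the covering, fix $x_0 \in M$ with lift $\wt x_0 \in \wt M$, let $d$ denote the forward distance of $\wt F$, and let $\phi^t$ be the geodesic flow on $HM$.

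For the inequality $h(F) \geq v^+(F)$, I would extract a maximal $\eps$-separated subset $\{\wt y_1, \dots, \wt y_N\}$ of the forward ball $B^+(\wt x_0, T)$. A standard volume-packing argument (relying on the uniform comparison of forward and backward balls of a fixed small radius, which follows from $\wt F$ being $C^2$ and strongly convex) gives $N \cdot c(\eps) \geq \vol(B^+(\wt x_0, T))$, so $\limsup_{T \to \infty} \tfrac{1}{T} \log N \geq v^+(F)$. Each $\wt y_i$ is joined to $\wt x_0$ by a forward-minimizing geodesic $\wt \gamma_i$ of length at most $T$; projecting the initial unit vector gives $v_i' \in H_{x_0} M$. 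I would argue that $\{v_i'\}$ is $(T, \eps')$-separated in the Bowen sense for a fixed auxiliary Riemannian metric on $HM$ and some $\eps' > 0$ depending only on $\eps$ and on the injectivity radius of $\pi$: if the orbits of $v_i'$ and $v_j'$ stayed $\eps'$-close throughout $[0, T]$, one could lift to a short path in $\wt M$ between $\wt y_i$ and $\wt y_j$, contradicting the $\eps$-separation. The definition of $h(F)$ via the exponential growth rate of $(T, \eps')$-separated sets then yields $h(F) \geq v^+(F)$.

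For the equality under non-positive flag curvature, the main Finslerian input is the forward Cartan--Hadamard theorem: $\exp_{\wt x_0} \colon T_{\wt x_0} \wt M \to \wt M$ is a diffeomorphism, and the function $t \mapsto d(\wt \gamma_1(t), \wt \gamma_2(t))$ is non-decreasing for any two forward geodesics emanating from $\wt x_0$. Consequently, a forward $\eps$-spanning set of $B^+(\wt x_0, T)$ of cardinality $M(T, \eps)$ pulls back under $\exp_{\wt x_0}$ to a $(T, \eps'')$-Bowen-spanning set of $H_{x_0} M$ of the same cardinality, with $\eps''$ controlled by $\eps$ and by a uniform bound on how angular openings at $\wt x_0$ translate into position separation along the corresponding geodesic. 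To upgrade from $H_{x_0} M$ to all of $HM$, I would cover $M$ by finitely many small balls $B(p_j, \delta)$ and repeat the construction from each lift $\wt p_j$; since every orbit of $\phi^t$ passes through some $B(p_j, \delta)$ within bounded time, the union gives a $(T, 2\eps'')$-spanning set of $HM$ of cardinality growing at most like $e^{(v^+(F) + o(1)) T}$, hence $h(F) \leq v^+(F)$.

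The main technical obstacle is keeping track of the asymmetry. One has to verify that forward and backward balls of a fixed small radius are uniformly bi-Lipschitz comparable (which holds because $\wt F$ is $C^2$ and strongly convex, so the unit ball and its reflection are both smooth convex bodies containing the origin in their interior), and that the Finslerian Cartan--Hadamard property does yield monotonicity of forward distance between forward geodesics sharing an endpoint --- this requires the second variation formula applied to Jacobi fields along \emph{forward} geodesics, with signs tracked under the failure of reversibility. Once these two ingredients are established, the combinatorial Manning--Egloff machinery transfers essentially verbatim.
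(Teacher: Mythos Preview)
Your sketch is a reasonable outline of the classical Manning argument, and the technical points you flag (uniform comparison of forward and backward small balls via strong convexity, and the forward Cartan--Hadamard monotonicity) are indeed the places where non-reversibility must be checked. However, there is nothing to compare against: the paper does not give a proof of this theorem. The statement is attributed to Manning and Egloff, and the paper's only comment is the single sentence ``The justification that Egloff gives in \cite{Egloff:Dynamics_uniform_finsler} to show that Manning's proof in \cite{Manning} holds in the Finsler context is still true in the non-reversible case.'' In other words, the paper defers entirely to the cited references and merely asserts that the adaptation goes through; your proposal is essentially an expansion of what that one-line remark is gesturing at.
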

The justification that Egloff gives in \cite{Egloff:Dynamics_uniform_finsler} to show that Manning's proof in \cite{Manning} holds in the Finsler context is still true in the non-reversible case.

Since $F$ has non-positive flag curvature if and only if $F \circ s$ has non-positive flag curvature, we see that in the non-positively curved case, $v^+(F) =v^-(F)$ if and only if the topological entropy of the geodesic flow of $F$ is equal to the topological entropy of the geodesic flow of $F\circ s$.

In fact, without using Manning's theorem, we can easily prove
\begin{proposition} \label{prop:volume_entropy_and_topologial_entropy_of_canonical_time_change}
 Let $\bar F$ be a reversible Finsler metric and $\beta$ an exact one-form on a closed manifold $M$. Let $F = \bar F + \beta$. Then 
\[
 v^+(F) = v^-(F)= v(\bar F), \quad \text{and} \quad h(F)=h(\bar F).
\]
\end{proposition}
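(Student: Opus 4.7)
The plan is to work on the universal cover $\wt M$ and exploit the fact that an exact form lifts to a form with a \emph{bounded} primitive (since $M$ is compact). Write $\beta = df$ with $f \in C^\infty(M)$, and let $\tilde f$ denote the lift to $\wt M$; because $M$ is compact, $\tilde f$ is globally bounded on $\wt M$, say $|\tilde f| \leq C$.

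First I would compare the distances. For any $C^1$ curve $c\colon[0,1]\to\wt M$ from $x$ to $y$, the computation already used in the proof of Lemma \ref{lem:canonical_time_change} gives
\[
 l_{\wt F}(c) = l_{\wt{\bar F}}(c) + \int_c \beta = l_{\wt{\bar F}}(c) + \tilde f(y) - \tilde f(x).
\]
The shift $\tilde f(y) - \tilde f(x)$ depends only on the endpoints, so minimizing over paths yields
\[
 d_{\wt F}(x,y) = d_{\wt{\bar F}}(x,y) + \tilde f(y) - \tilde f(x),
\]
and therefore $|d_{\wt F}(x,y) - d_{\wt{\bar F}}(x,y)| \leq 2C$.

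From this bi-Lipschitz (in fact, bi-bounded) comparison, the forward and backward $\wt F$-balls are sandwiched:
\[
 B^{\wt{\bar F}}(x, r-2C) \subset B^+_{\wt F}(x,r) \subset B^{\wt{\bar F}}(x, r+2C),
\]
and similarly for $B^-_{\wt F}(x,r)$. Since by Lemma \ref{lem:Finsler-Randers} the Holmes--Thompson volumes of $\wt F$ and $\wt{\bar F}$ coincide on $\wt M$, the volumes of these balls differ only by the shift in radius. Taking $\frac{1}{R}\log$ and passing to the limit as $R\to\infty$ kills the additive constant $2C$, so
\[
 v^+(F) = v^-(F) = v(\bar F).
\]

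For the topological entropy, I would invoke Lemma \ref{lem:canonical_time_change}(2) directly: since $\beta$ is exact, the geodesic flows of $F$ and $\bar F$ are smoothly conjugate on $HM$. Topological entropy is invariant under (topological, hence smooth) conjugacy, so $h(F) = h(\bar F)$. The only step that needs any care is the volume comparison on $\wt M$, and the key mild subtlety is simply that exactness of $\beta$ on $M$ (as opposed to merely on $\wt M$) is what guarantees the boundedness of $\tilde f$; once that is in hand the sandwich argument is immediate.
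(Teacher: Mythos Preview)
Your argument is correct, and it differs from the paper's in both halves. For the volume entropy, the paper counts orbit points of $\pi_1(M)$: compactness of $M$ gives a constant $C$ with $C^{-1}\vol(B^\pm(x,r)) \leq \sharp\{\gamma : d(x,\gamma x)\leq r\} \leq C\,\vol(B^\pm(x,r))$, and then uses that $d(x,\gamma x)=d(\gamma x,x)=\bar d(x,\gamma x)$ because these are lengths of closed curves on $M$. Your route is more direct: you observe that the primitive $\tilde f$ is bounded (since it is the lift of a function on a compact manifold), get the exact identity $d_{\wt F}(x,y)=d_{\wt{\bar F}}(x,y)+\tilde f(y)-\tilde f(x)$, and sandwich balls with a radius shift of $2C$; this avoids the orbit-counting lemma entirely and in fact gives a sharper statement (an exact formula for the distance, not just asymptotic equivalence). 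For the topological entropy, the paper appeals to the exponential growth rate of periodic orbits, a characterization that in general needs hypotheses (expansivity, Anosov, etc.); your use of the smooth conjugacy from Lemma~\ref{lem:canonical_time_change}(2) together with the conjugacy-invariance of topological entropy is cleaner and requires no such hypothesis. Both proofs hinge on the same structural point---exactness of $\beta$ on $M$, not merely on $\wt M$---which you correctly flag as the one place where care is needed.
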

\begin{proof}
The proof that the two topological entropies coincides is trivial: As $\beta$ is exact, the two geodesic flows are obtained by a time change that does not change the length of periodic geodesics (see Lemma \ref{lem:canonical_time_change}). Since the topological entropy is obtained as the exponential growth rate of the periodic orbits, the entropies of the two flows must coincide. Let us now prove that the volume entropies are the same.

 Let $\wt M$ be the universal cover of $M$, $x\in \wt M$ and $r>0$. Since $M$ is compact, there exists $C\geq 1$ such that 
\begin{align*}
 &\frac{1}{C}\vol (B^+(x,r), \wt F) \leq  \sharp\{ \gamma \in \pi_1(M) \mid d(x,\gamma \cdot x) \leq r \} \leq C \vol (B^+(x,r), \wt F), \text{ and}\\
 &\frac{1}{C}\vol (B^-(x,r), \wt F) \leq  \sharp\{ \gamma \in \pi_1(M) \mid d(\gamma \cdot x, x) \leq r \} \leq C \vol (B^-(x,r), \wt F),   
\end{align*}
where $d(\cdot, \cdot)$ is the distance for $\wt F$.
Now, since $\beta$ is an exact form on $M$, we see that $d(x,\gamma \cdot x) = d(\gamma \cdot x, x)$ (because the length of any closed curve in $M$ is unchanged). So, 
\begin{multline*}
 \vol (B^+(x,r), \wt F) \leq C \sharp\{ \gamma \in \pi_1(M) \mid d(x,\gamma \cdot x) \leq r \} \\ = C \sharp\{ \gamma \in \pi_1(M) \mid d(\gamma \cdot x, x) \leq r \} \leq C^2 \vol (B^-(x,r), \wt F).
\end{multline*}
Hence, $v^+(F) \leq v^-(F)$, and by symmetry we also have $v^-(F) \leq v^+(F)$.

Denoting by $\bar d(\cdot, \cdot)$ the distance for $\bar F$, since for all $\gamma \in \pi_1(M)$, $d(x,\gamma \cdot x) = \bar d(x,\gamma \cdot x)$, we also have $v^+(F) =v(\bar F)$.
\end{proof}

\subsection{The weak topological entropy inequality and a counterexample to the sharp version}

Let us quickly recall why, just by following the Riemannian proof, we obtain a weak inequality for $\lambda_1(\wt F)$. Note that for the following proposition, we do not need $\wt M$ to be the universal cover of a closed manifold. The result holds for any open manifold.

\begin{proposition} \label{prop:weak_inequality}
 Let $\wt F$ be a Finsler metric on a manifold $\wt M$ of dimension $n$. Then the bottom of the $L^2$-spectrum of the BF-Laplacian satisfies
\[
 \lambda_1(\wt F) \leq \frac{n}{4} \min\{(v^+(\wt F))^2, (v^-(\wt F))^2\}
\] 
\end{proposition}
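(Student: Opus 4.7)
My plan is to follow the classical Brooks argument for Riemannian universal covers, producing exponentially decaying test functions built from the Finsler forward (or backward) distance, with cutoffs to ensure compact support. The factor $n$ appearing in the Finsler bound is precisely the price of passing from a Finsler-Lipschitz bound on $df$ to a symbol-metric bound via the averaging formula
\[
 \|df\|^2_{\sigma^{\wt F}}(x) = \frac{n}{\voleucl(\S^{n-1})} \int_{S^{\wt F}_x \wt M} (df(v))^2\, \alpha^{\wt F}.
\]

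First, I would fix a basepoint $x_0 \in \wt M$ and let $r(x) := d_{\wt F}(x_0, x)$ denote the forward distance. For each $s > v^+(\wt F)/2$ and each large $R$, I take the test function $f_{s,R} := \chi_R\, e^{-sr}$, where $\chi_R$ is a smooth cutoff equal to $1$ on the forward ball $B^+(x_0, R)$ and supported in $B^+(x_0, R+1)$. The layer-cake formula
\[
 \int_{\wt M} e^{-2sr}\, \Omega^{\wt F} \;=\; 2s \int_0^\infty e^{-2sR}\, \vol(B^+(x_0, R), \wt F)\, dR
\]
together with the definition of $v^+(\wt F)$ guarantees that this integral is finite when $2s > v^+$. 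The cutoff contribution to the energy is controlled by $\|d\chi_R\|_{\sigma^{\wt F}}^2$ on the annular shell $B^+(x_0,R+1)\setminus B^+(x_0,R)$, whose $L^2$-mass against $e^{-2sr}$ decays like $e^{-2sR}$ and hence vanishes as $R \to \infty$; so the Rayleigh quotient of $f_{s,R}$ converges to that of $e^{-sr}$.

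The heart of the argument is the pointwise estimate $\|dr\|^2_{\sigma^{\wt F}} \leq n$ almost everywhere. Plugging the Finsler-Lipschitz bound $dr(v) \leq F(x,v) = 1$ for $v\in S^{\wt F}_x\wt M$ into the symbol formula gives $(dr(v))^2 \leq 1$ in the reversible case, and integrating against $\alpha^{\wt F}$ (which has total mass $\voleucl(\S^{n-1})$) yields the bound directly. Combined with the main term
\[
 E^{\wt F}(e^{-sr}) \;=\; s^2 \int_{\wt M} e^{-2sr}\, \|dr\|^2_{\sigma^{\wt F}}\, \Omega^{\wt F},
\]
this gives $R^{\wt F}(f_{s,R}) \leq ns^2 + o_R(1)$. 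Letting $R \to \infty$ and then $s \downarrow v^+/2$ yields $\lambda_1(\wt F) \leq \frac{n}{4}(v^+)^2$. Running the same argument with the backward distance $r^-(x) := d_{\wt F}(x, x_0)$, whose sub-level sets are the backward balls $B^-(x_0,R)$, gives $\lambda_1(\wt F) \leq \frac{n}{4}(v^-)^2$; taking the minimum of the two bounds finishes the proof.

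The main obstacle is the symbol estimate $\|dr\|^2_{\sigma^{\wt F}} \leq n$ when $\wt F$ is non-reversible. In that case $r$ is only forward-Lipschitz, and $dr(v)$ can dip as low as $-F(x,-v)$ on the hemisphere of $S^{\wt F}_x\wt M$ where $dr < 0$, with $F(x,-v)$ possibly exceeding $1$. I would handle this by splitting the integral over $S^{\wt F}_x\wt M$ into $\{dr \geq 0\}$ and $\{dr < 0\}$ and exploiting the flip-asymmetry of the angle measure $\alpha^{\wt F}$ -- in the spirit of Lemma \ref{lem:flip_action} and Lemma \ref{lem:Finsler-Randers} -- which weights the ``bad'' hemisphere proportionally less and exactly compensates the asymmetric Lipschitz bound; alternatively, one may replace $r$ with a suitable symmetrized distance that is doubly Lipschitz on the $F$-unit sphere.
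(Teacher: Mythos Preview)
Your overall strategy---test with $e^{-s\rho^{\pm}}$, bound the Rayleigh quotient via a pointwise estimate on $\lVert d\rho^{\pm}\rVert_{\sigma^{\wt F}}^2$, then let $s\downarrow v^{\pm}/2$---is exactly the paper's; the paper skips your cutoff step and asserts the key bound $|L_X\pi^*\rho^+(x,\xi)|\leq 1$ in a single line.

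You are right to flag the non-reversible case as the obstacle, but your proposed fix via the flip-asymmetry of $\alpha^F$ does not work: the angle form does \emph{not} compensate the bad hemisphere. Take the flat Randers metric $F(v)=|v|+b\,v_1$ on $\R^2$; then $\rho^+(x)=|x|+bx_1$, and at any $x=(r,0)$ with $r>0$ one has $d_x\rho^+=(1+b,0)$. Feeding this into the formula of Lemma~\ref{lem:symbol_increase_precise_statement} (which already encodes the full asymmetry of $\alpha^F$ through the weight $1+\pi^*\beta(\bar X)$) gives
\[
\lVert d_x\rho^+\rVert_{\sigma^F}^2 \;=\; \frac{2(1+b)^2}{b^2}\Bigl(\frac{1}{\sqrt{1-b^2}}-1\Bigr),
\]
which already exceeds $n=2$ for $b\gtrsim 0.4$ and diverges as $b\to 1$. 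So the pointwise bound $\lVert d\rho^+\rVert_{\sigma^F}^2\leq n$ is simply false, and no hemisphere decomposition can rescue it. Your alternative symmetrized-distance idea is also problematic as stated: for $r_{\mathrm{sym}}=(\rho^++\rho^-)/2$ one has $|dr_{\mathrm{sym}}(v)|\leq (1+F(x,-v))/2$ on $S^F_x\wt M$, still unbounded, and the sublevel sets of $r_{\mathrm{sym}}$ need not grow at rate $\min(v^+,v^-)$. In short, neither of your patches closes the gap. (For what it is worth, the paper's own one-line justification shares the same weakness: the eikonal equation $F^*(d\rho^+)=1$ yields only $d_x\rho^+(v)\leq 1$ on $S^F_x\wt M$, not $|d_x\rho^+(v)|\leq 1$.)
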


\begin{proof}
 We fix a base point $O \in \wt M$ and define the forward and backward distance functions by, respectively, 
\[
 \rho^+(x) = d(O,x) \quad \text{and} \quad \rho^-(x) = d(x,O).
\]
By definition of $v^+(\wt F)$ and $v^-(\wt F)$, we have that for all $2s> v^+(\wt F)$ and $2t> v^-(\wt F)$, $e^{-s\rho^+(x)} \in L^2(\wt M)$ and $e^{-t\rho^-(x)} \in L^2(\wt M)$.

We will just give an upper bound for the Rayleigh quotient of $e^{-s\rho^+(x)}$. The case of $e^{-t\rho^-(x)}$ is exactly the same.

First, we have that
\begin{equation*}
L_X \pi^{\ast} e^{-s\rho^+} (x,\xi) = -s \left(L_X\pi^{\ast} \rho\right)(x,\xi) e^{-s\rho^+(x)}.
\end{equation*}
So, 
\begin{equation*}
 \int_{H\M} \left(L_X \pi^{\ast} e^{-s\rho^+} \right)^2 \ada = \int_{x\in \M} s^2 \left(\int_{\xi \in H_x\M} \left(L_X\pi^{\ast} \rho^+ (x,\xi)\right)^2 \alpha \right) e^{-2s\rho^+(x)} \Omega.
\end{equation*}
Now, $\left|L_X\pi^{\ast} \rho^+ (x,\xi) \right|\leq \left| d_x\rho (v_x) \right| = 1$, where $v_x$ is the vector in $S_x\wt M$ such that the direction, at $x$, of the geodesic from $O$ to $x$ is $v_x$. Therefore, we get
\begin{align*}
  \lambda_1(\wt F) &\leq \frac{n}{\voleucl\left(\S^{n-1}\right)} \left(s^2 \int_{\M} e^{-2s\rho^+(x)}  \int_{H_{x}\M} \alpha\, \Omega\right) \left(\int_{\M} e^{-2s\rho^+(x)} \Omega \right)^{-1} \\
 &\leq n s^2 . 
\end{align*}
Since the above inequality is true for all $s>v^+(\wt F)/2$, we deduce that $\lambda_1(\wt F)  \leq n (v^+(\wt F))^2/4$. Doing the same computation with $e^{-t\rho^-(x)}$ yields $\lambda_1(\wt F)  \leq n (v^-(\wt F))^2/4$. 
\end{proof}

We can now finish the construction of the last surprise: Finsler metrics such that the sharp inequality for the bottom of the spectrum is not verified. Let us recall Theorem \ref{thm:lambda_1_bigger_h}:
\begin{theorem}
 Let $g$ be a hyperbolic metric on a manifold $M$. Let $h$ be a smooth function on $M$ such that the zeros of $dh$ are isolated and $\lVert dh \rVert_{g^*} <1$. Let $F = \sqrt{g} + dh$, $\lambda_1(\wt F)$ be the bottom of the $L^2$-spectrum and $h(F)$ be the topological entropy of the geodesic flow of $F$. Then $F$ and $g$ have the same marked length spectrum and
\[
 4\lambda_1(\wt F) > h(F)^2 = (n-1)^2.
\]
\end{theorem}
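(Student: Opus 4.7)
The proof assembles three ingredients, each of which has been prepared earlier in the paper or is classical. The guiding observation is that, since $dh$ is exact, the passage from $\bar F := \sqrt{g}$ to $F = \bar F + dh$ is a trivial time change of the geodesic flow, so all the standard dynamical invariants on the base transfer verbatim from $\bar F$ to $F$; only the BF-Laplacian bottom of spectrum must be shown to move strictly upward.

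First, the equality of marked length spectra follows immediately from the second part of Lemma \ref{lem:canonical_time_change}. Second, for the topological entropy, Proposition \ref{prop:volume_entropy_and_topologial_entropy_of_canonical_time_change} gives $h(F) = h(\bar F)$; and since $\bar F$ is the hyperbolic Riemannian metric in dimension $n$, the classical computation yields $h(\bar F) = n-1$, hence $h(F)^2 = (n-1)^2$.

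Third, and crucially, I would bound $\lambda_1(\wt F)$ from below. For hyperbolic $n$-space, the bottom of the Riemannian $L^2$-spectrum takes the classical value $\lambda_1(\wt{\bar F}) = (n-1)^2/4$, so already $4\lambda_1(\wt{\bar F}) = (n-1)^2$. The hypotheses of the second part of Proposition \ref{prop:lambda_1_negatively_curved} are met here: $\bar F$ is Riemannian of (constant) negative curvature, and $dh$ is a $1$-form with only isolated zeros. That proposition then furnishes the strict inequality $\lambda_1(\wt F) > \lambda_1(\wt{\bar F}) = (n-1)^2/4$. Multiplying by $4$ and combining with the entropy computation gives
\[
 4\lambda_1(\wt F) \;>\; (n-1)^2 \;=\; h(F)^2,
\]
which is the conclusion of the theorem.

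The real difficulty, the strict inequality for the bottom of the $L^2$-spectrum on the universal cover, has already been absorbed into Proposition \ref{prop:lambda_1_negatively_curved}. The compact-case argument from Corollary \ref{cor:compact_case_bigger_spectrum} cannot be used directly, because $\lambda_1(\wt F)$ is in general not realized by any eigenfunction, so one cannot simply feed an extremizer into Lemma \ref{lem:symbol_increase_precise_statement}. Granted that strict inequality, the present theorem is essentially the observation that the hyperbolic model saturates the Riemannian inequality $4\lambda_1 \leq h^2$, so any strict enlargement of $\lambda_1$ achieved at unchanged entropy produces a counterexample.
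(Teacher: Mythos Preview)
Your reduction is formally correct, but it is circular in the context of this paper. You invoke Proposition~\ref{prop:lambda_1_negatively_curved} as if it had been ``prepared earlier,'' but the paper explicitly states, just before the proof of the theorem, that \emph{the proof of this theorem contains the proof of Proposition~\ref{prop:lambda_1_negatively_curved}}, and that no separate proof of the proposition is given. So the strict inequality $\lambda_1(\wt F) > \lambda_1(\wt g)$ is precisely what the theorem's proof is supposed to establish; deferring it to the proposition just pushes the entire content elsewhere without supplying it.

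What the paper actually does at this point is the hard analytic work you allude to but do not carry out. Starting from the formula in Lemma~\ref{lem:symbol_increase_precise_statement}, one obtains a pointwise lower bound of the form
\[
\lVert d_xf\rVert_{\sigma^F}^2 \;\ge\; \lVert d_xf\rVert_{g^*}^2\Bigl(1 + \tfrac{n}{8}\lVert d_xh\rVert_{g^*}^2\Bigr).
\]
Since $dh$ has isolated zeros, outside a union of small disjoint balls $B_k$ around those zeros one has $\lVert d_xh\rVert_{g^*}^2 > c > 0$. The paper then rules out that a minimizing sequence for $\lambda_1(\wt F)$ can concentrate inside these balls: by a Cheeger-constant estimate on small hyperbolic balls, any function supported in $\cup B_k'$ has Riemannian Rayleigh quotient far above $(n-1)^2$. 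A contradiction argument with a minimizing sequence $f_i$ (assuming $\liminf$ equals $\lambda_1(\wt g)$ forces $\int_{\wt M\setminus\cup B_k}\lVert df_i\rVert_{g^*}^2\to 0$, which makes $R^g(f_i)$ close to that of a function supported in $\cup B_k'$, hence large) then yields the strict inequality. Your proposal correctly identifies the easy steps (same marked length spectrum, same entropy, hyperbolic saturation of $4\lambda_1 = h^2$), but the substantive step is missing, not merely postponed.
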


The proof of this result also contains the proof of Proposition \ref{prop:lambda_1_negatively_curved}, one just has to make obvious notational changes. We hence do not provide an explicit proof of that proposition.
\begin{proof}
 The fact that $F$ and $g$ have the same marked length spectrum follows from Lemma \ref{lem:canonical_time_change}. Moreover, by Proposition \ref{prop:volume_entropy_and_topologial_entropy_of_canonical_time_change}, the topological entropy of $F$ is the same as the topological entropy of $g$ and they are equal to the volume entropy. And since $g$ is hyperbolic, $4\lambda_1(\wt g) = (n-1)^2 = h(g)^2$. 

So all we have to do to prove the above inequality is show that $\lambda_1(\wt F) > \lambda_1(\wt g)$. This would be trivial if $\lambda_1(\wt F)$ was an eigenvalue (the same argument as in the proof of Corollary \ref{cor:compact_case_bigger_spectrum} would immediately yields the answer), but this is in general not the case. Hence we have to work more.

Let us abuse notations a bit and write $h$ again for the lift of $h$ to the universal cover $\wt M$. The function $h$ will only be on the universal cover for the rest of this proof, so hopefully this will not cause any confusion.

With this abuse of notation, by our previous computations (see Equation \eqref{eq:formula_symbol}), we have that for any $f \in C^{\infty}(\wt M) \cap L^2(\wt M)$
\begin{equation*}
 \lVert df \rVert^2_{\sigma^{F}}  = \frac{2n}{\voleucl(\S^{n-1})} \int_{\bar S^+_x\wt{M}} \frac{1}{1 - (d_xh(v))^2} \left(d_xf(v)\right)^2 \bar \alpha,
\end{equation*}
where $\bar \alpha$ is just the Riemannian angle measure on the Riemannian unit spheres $\bar S_x\wt{M}$. So in particular,
\begin{align*}
 \lVert d_xf \rVert^2_{\sigma^F} &\geq \frac{2n}{\voleucl(\S^{n-1})} \left( \int_{\bar S^+_x\wt{M}} \left(d_xf(v)\right)^2 \bar \alpha + \int_{\bar S^+_x\wt{M}} (d_xh(v))^2 \left(d_xf(v)\right)^2 \bar \alpha \right) \\
&\geq \lVert d_xf \rVert^2_{g^*} + \frac{2n}{\voleucl(\S^{n-1})} \int_{\bar S^+_x\wt{M}} (d_xh(v))^2 \left(d_xf(v)\right)^2 \bar \alpha
\end{align*}

In the rest of the proof, we will be using the fact that $F$ is a Randers metric, i.e., that we have a Riemannian metric $g$ to work with. The fact that the bottom of the spectrum of $\wt F$ is strictly greater than the one for $\wt g$ should certainly still hold if $g$ was replaced by any reversible metric $\bar F$, but the proof would probably be more tedious (or at least we did not find an easy proof).

To obtain the strict inequality we are aiming for, we will first find a lower bound for $\lVert d_xf \rVert^2_{\sigma^F}$ in terms of $\lVert d_xf \rVert^2_{g^*}$.
We first notice that $\int_{\bar S^+_x\wt{M}} (d_xf(v))^2 \left(d_xh(v)\right)^2 \bar \alpha$ is minimized when $\nabla f (x)$ and $\nabla h(x)$ are orthogonal (here $\nabla$ and orthogonal are defined with respect to the hyperbolic metric $g$). So we suppose that $\nabla f (x)$ and $\nabla h(x)$ are orthogonal. We choose a coordinate system on $S_x\wt{M}$ such that for $v\in S_x\wt{M}$, $\theta(v)$ represents the angle between the direction of $\nabla h (x)$ and the projection of $v$ onto the plane containing $\nabla f (x)$ and $\nabla h(x)$.
In other words, we write
\[
 S_x\wt{M} =\left\{ (\theta, \xi) \in [-\pi,\pi] \times H^{n-2} \right\},
\]
where $H^{n-2}$ is the unit hemisphere of dimension $n-2$, and such that for $v= (\theta, \xi) \in S_x\wt{M}$ we have
\begin{align*}
 d_xh(v) &= \lVert d_xh\rVert_{g^*} \cos \theta \\
 d_xf(v) &= \lVert d_xf\rVert_{g^*} \sin \theta.
\end{align*}
If we write $d\theta d\xi$ for the Riemannian angle $\bar \alpha$ in the coordinates that we choose on $S_x\wt{M} = [-\pi,\pi] \times H^{n-2}$, then we have
\begin{align*}
 2 \int_{\bar S^+_x\wt{M}} (d_xf(v))^2 \left(d_xh(v)\right)^2 \bar \alpha &= \int_{\bar S_x\wt{M}} (d_xf(v))^2 \left(d_xh(v)\right)^2 \bar \alpha \\
&= \int_{H^{n-2}} \left( \int_{-\pi}^{\pi} \lVert d_xf\rVert^2_{g^*}\lVert d_xh\rVert_{g^*}^2 \cos^2 \theta \sin^2 \theta d\theta \right) d\xi \\
&= \lVert d_xf\rVert^2_{g^*}\lVert d_xh\rVert_{g^*}^2 \frac{\pi}{4} \int_{H^{n-2}} d\xi \\
 &= \frac{\lVert d_xf\rVert^2_{g^*}\lVert d_xh\rVert_{g^*}^2 }{8} \voleucl(\S^{n-1}).
\end{align*} 
Hence,
\begin{equation*}
  \lVert d_xf \rVert^2_{\sigma^F} \geq \lVert d_xf \rVert^2_{g^*}\left(1 + n\frac{\lVert d_xh\rVert^2_{g^*}}{8} \right).
\end{equation*}

Therefore, given the characterization of $\lambda_1$ has the infimum of the Rayleigh quotient of smooth $L^2$ functions, we have 
\[
 \lambda_1(\wt F) \geq \inf_{f\in L^2(\wt M)} \frac{\int_{\wt M} \lVert d_xf \rVert^2_{g^*}\left(1 + n\lVert d_xh\rVert^2_{g^*}/8 \right)\Omega^g}{\int_{\wt M} f^2 \Omega^g}.
\]

Recall that we chose $h$ such that $dh$ had isolated zeros. We denote these zeros by $z_k$, $k\in \mathbb{N}$. Since the $z_k$ are isolated, for $\eps>0$ small enough, there exists a constant $c=c(\eps)>0$ such that outside of some balls $B_k = B(z_k,\eps)$, we have $\lVert d_xh\rVert^2_{g^*} > c$. Moreover, taking $\eps$ small enough, we can suppose that the $B_k$ are pairwise disjoint (the radius of the balls can be chosen uniform, while still having them pairwise disjoint because the $z_k$ are the lifts of the isolated zeros of $dh$ on the compact manifold $M$). Up to taking $\eps$ smaller still, we can suppose that the balls $B'_k = B(z_k,2\eps)$ are still pairwise disjoint.

We will first show that the infimum in the equation above cannot be attained by functions with support inside the balls $B'_k$.
Let $h_{\text{Cheeger}}(B'_k)$ be the Cheeger constant on $B'_k$, i.e., 
\[
 h_{\text{Cheeger}}(B'_k) = \inf \left\{\frac{\mathrm{Area}(\partial U)}{\vol(U)} \mid U \text{ open}, \bar U \subset B'_k \right\}
\]
Note that since all the $B'_k$ are balls of the same radius and that the metric is hyperbolic (and hence homogeneous), $h_{\text{Cheeger}}(B'_k)$ is independent of $k$. By the proof of the classical Cheeger inequality (see for instance \cite[p.~91]{SchoenYau}), we have that if $f$ is a function such that $\mathrm{supp}\, f \subset \cup B'_k$, then
\[
\int_{ B'_k} \lVert d_xf \rVert^2_{g^*} \Omega^g \geq \frac{h_{\text{Cheeger}}^2(B'_k)}{4} \int_{B'_k}  f^2 \Omega^g.
\]
Hence, since $h_{\text{Cheeger}}(B'_k)$ is independent of $k$,
\begin{multline*}
 \int_{\wt M} \lVert d_xf \rVert^2_{g^*} \Omega^g = \int_{\cup B'_k} \lVert d_xf \rVert^2_{g^*} \Omega^g \geq \frac{h_{\text{Cheeger}}^2(B'_k)}{4} \int_{\cup B'_k}  f^2\Omega^g \\ = \frac{h_{\text{Cheeger}}^2(B'_k)}{4}\int_{\wt M}  f^2\Omega^g.
\end{multline*}
Now the Cheeger constant on a very small hyperbolic ball $B'_k$ is close to the Cheeger constant of a small Euclidean ball, so for $\eps >0$ small enough, the Cheeger constant on $B'_k$ is approximately $2/\eps$.
In particular, taking $\eps$ small enough, we see that if $f$ is such that $\mathrm{supp}\, f \subset \cup B'_k$, then 
\begin{equation} \label{eq:big_Rayleigh_quotient}
  \frac{\int_{\wt M} \lVert d_xf \rVert^2_{g^*}\left(1 + n\lVert d_xh\rVert^2_{g^*}/8 \right)\Omega^g}{\int_{\wt M} f^2 \Omega^g}\geq R^g(f) \geq \frac{1}{4\eps^2} > 10 (n-1)^2,
\end{equation}
where $R^g(f) = \int_{\wt M} \lVert d_xf \rVert^2_{g^*}\Omega^g /\int_{\wt M} f^2 \Omega^g$ is the Rayleigh quotient for the hyperbolic metric $g$.
We fix such an $\eps$ once and for all.

Let $f_i$ be a sequence of functions in $L^2(\wt M)$ such that $R^F(f_i)$ converges to $\lambda_1(\wt F)$. We suppose furthermore that all the $f_i$ are normalized so that $\int_{\wt M} f_i \Omega^g=1$. Our goal is to show that 
\[
 \lambda_1(\wt F) \geq \liminf_{i\rightarrow \infty} \int_{\wt M} \lVert d_xf_i \rVert^2_{g^*}\left(1 + n\lVert d_xh\rVert^2_{g^*}/8 \right)\Omega^g > \lambda_1(\wt g).
\]

We proceed by contradiction: Suppose that 
\[
\liminf_{i\rightarrow \infty}  \int_{\wt M} \lVert d_xf_i \rVert^2_{g^*}\left(1 + n\lVert d_xh\rVert^2_{g^*}/8 \right)\Omega^g = \lambda_1(\wt g). 
\]
Then, up to passing to a subsequence, we have
\begin{equation*}
\lim_{i\rightarrow \infty} \int_{\wt M} \lVert d_xf_i \rVert^2_{g^*} \Omega^g  = \lambda_1(\wt g) \quad \text{and} \quad
 \lim_{i\rightarrow \infty} \int_{\wt M} \lVert d_xf_i \rVert^2_{g^*}\lVert d_xh\rVert^2_{g^*} \Omega^g  = 0.
\end{equation*}

Since $\lVert d_xh\rVert^2_{g^*} >c$ outside of the balls $B_k$ defined above, we have
\[
 \lim_{i\rightarrow \infty} \int_{\wt M \smallsetminus \cup B_k} \lVert d_xf_i \rVert^2_{g^*} \Omega^g  = 0
\]
Morally, this means that the functions $f_i$ tends to be almost constant outside of the balls $B_k$, and since they are in $L^2(\wt M)$ they have to be almost $0$ outside of the $B_k$. So their Rayleigh quotient has to be close to the Rayleigh quotient of a function with support in the $B_k$, but we proved before that the Rayleigh quotient of such function is very big, which gives us a contradiction.

Let us be more precise: Let $\eta >0$ be arbitrary. Let $I>0$ such that, for $i >I$, $\int_{\wt M \smallsetminus \cup B_k} \lVert d_xf_i \rVert^2_{g^*} \Omega^g  <\eta$. We only consider $i>I$ in the rest of the proof.

We define the functions $f'_i$ in the following way:
\begin{equation*}
 f'_i(x) = \begin{cases}
            f_i(x) & \text{if } x \in \cup B_k= \cup B(z_k, \eps) \\
            0 & \text{if } x \notin \cup B'_k= \cup B(z_k, 2\eps),
           \end{cases}
\end{equation*}
and we choose $f'_i(x)$ to decrease linearly (with the radius) between $\partial B_k$ and $\partial B'_k$. 

Since ${\int_{\wt M \smallsetminus \cup B_k} \lVert d_xf_i \rVert^2_{g^*} \Omega^g  <\eta}$ and $\int_{\wt M} f_i^2 \Omega^g =1$, there exists a constant $\eta_1 = \eta_1(\eta, \eps)>0$, depending on $\eta$ and $\eps$ such that 
\[
 |R^g(f_i) -R^g(f'_i)| <\eta_1.
\]
 Moreover, the constant $\eta_1$ goes to zero as $\eta$ goes to zero (and gets bigger as $\eps$ gets smaller, but we fixed $\eps$ before). So for some small enough $\eta$ (depending on $\eps$), we have $\eta_1 \leq 1$.

Hence, by Equation \eqref{eq:big_Rayleigh_quotient}, since $\mathrm{supp}\, f'_i \subset \cup B'_k$, we have
\begin{equation*}
 R^g(f_i) \geq R^g(f'_i) -1 > 10(n-1)^2 - 1 > (n-1)^2 = \lambda_1(\wt g).
\end{equation*}
This is in contradiction with the fact that $\displaystyle \lim_{i\rightarrow +\infty} R^g(f_i) = \lim_{i\rightarrow +\infty} \int_{\wt M} \lVert d_xf_i \rVert^2_{g^*} \Omega^g  = \lambda_1(\wt g)$, and that ends the proof.
\end{proof}

Let us finish this article by a remark. All along this article I have been claiming that it is difficult to get a strict inequality in Theorem \ref{thm:lambda_1_bigger_h}, and the proof that I give certainly is more involved than the proof of Theorem \ref{thm:same_length_spectrum_higher_spectrum}. However, I might just have not been clever enough. If that may very well be true, I would however like to point out the following example that should serve as a word of caution, even though it is not in a cocompact setting.
\begin{proposition}
 Let $(\Hyp^n,g)$ be the hyperbolic $n$-space. Let $\beta$ be a one-form on $\Hyp^n$ such that $\lVert \beta_x \rVert_{g^*}$ tends to zero as $x$ approaches $\partial \Hyp^n$ (and such that $\lVert \beta_x \rVert_{g^*}<1$). Let $F = \sqrt{g} + \beta$. Then,
\[
 \lambda_1(F) = \lambda_1(g) = (n-1)^2/4.
\]
\end{proposition}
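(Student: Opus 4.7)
The plan is to bound $\lambda_1(F)$ from above and below by $(n-1)^2/4$, exploiting the fact that $\beta$ vanishes at infinity to localize test functions in regions where $F$ is nearly equal to $\sqrt{g}$.

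For the lower bound I would apply Lemma \ref{lem:symbol_increase_precise_statement} directly: since $\lVert d_x f \rVert_{\sigma^F}^2 \geq \lVert d_x f \rVert_{\sigma^g}^2$ pointwise and $\Omega^F = \Omega^g$ by Lemma \ref{lem:Finsler-Randers}, one has $R^F(f) \geq R^g(f)$ for every $f \in C^\infty(\Hyp^n) \cap L^2(\Hyp^n, \Omega^g)$, hence $\lambda_1(F) \geq \lambda_1(g) = (n-1)^2/4$. This is just a non-compact version of the argument behind Corollary \ref{cor:compact_case_bigger_spectrum}.

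For the upper bound, the idea is to produce a minimizing sequence for $\lambda_1(F)$ whose supports escape to the ideal boundary, where $\lVert \beta \rVert_{g^*}$ is small. The key technical estimate, extracted from Lemma \ref{lem:symbol_increase_precise_statement}, is that if $\lVert \beta_x \rVert_{g^*} \leq \eta < 1$ on an open set $U$ and $\mathrm{supp}\, f \subset U$, then pointwise on $U$,
\[
 \lVert d_x f \rVert_{\sigma^F}^2 \;\leq\; \frac{1}{1-\eta^2}\, \lVert d_x f \rVert_{\sigma^g}^2,
\]
because $\lvert \pi^{\ast}\beta(\bar X)(x,\xi) \rvert \leq \lVert \beta_x \rVert_{g^*} \leq \eta$ inside the integrand appearing in Lemma \ref{lem:symbol_increase_precise_statement}. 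Given $\eps > 0$, I would fix a compactly supported $\psi$ with $R^g(\psi) \leq (n-1)^2/4 + \eps$; such $\psi$ exist because $(n-1)^2/4$ is the bottom of the $L^2$-spectrum of the hyperbolic Laplacian, and a standard radial cutoff of the formal eigenfunction $e^{-(n-1)\rho/2}$ does the job. Letting $\phi_t$ denote a one-parameter family of hyperbolic isometries translating a fixed basepoint along a geodesic ray toward $\partial \Hyp^n$, the translates $\psi_t := \psi \circ \phi_t^{-1}$ satisfy $R^g(\psi_t) = R^g(\psi)$ by isometry invariance, while $\mathrm{supp}\, \psi_t$ converges to a point of $\partial \Hyp^n$. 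Since $\lVert \beta \rVert_{g^*} \to 0$ there, for $t$ large enough $\lVert \beta \rVert_{g^*} \leq \eps$ on $\mathrm{supp}\, \psi_t$, and the displayed bound yields
\[
 R^F(\psi_t) \;\leq\; \frac{(n-1)^2/4 + \eps}{1-\eps^2}.
\]
Letting $\eps \to 0$ then gives $\lambda_1(F) \leq (n-1)^2/4$.

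The step I would expect to be most delicate is the pointwise symbol estimate, but it reduces to a single application of Lemma \ref{lem:symbol_increase_precise_statement} combined with the elementary inequality $\lvert \pi^{\ast}\beta(\bar X) \rvert \leq \lVert \beta_x \rVert_{g^*}$ on $H_xM$, so it is essentially free. This example is meant as a warning: the hypothesis in Theorem \ref{thm:lambda_1_bigger_h} that $M$ be closed (equivalently, that $\lVert \beta \rVert$ be bounded below on a set of positive measure near every end of $\wt M$) is genuinely used, since as soon as $\beta$ is allowed to decay to zero at infinity, minimizing sequences for $\lambda_1(\wt F)$ can be pushed into the region where the perturbation is negligible, and the strict inequality collapses.
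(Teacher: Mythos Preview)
Your proof is correct. The lower bound is identical to the paper's, via the symbol comparison. For the upper bound, however, the paper proceeds differently: it observes that $F$ is \emph{asymptotically Riemannian} in the sense of \cite{BCCV}, invokes \cite[Theorem~4.1]{BCCV} to obtain $4\lambda_1(F)\le (v^+(F))^2$, and then spends most of the proof showing that $v^+(F)=n-1$ by comparing forward $F$-annuli with hyperbolic annuli outside a compact set.

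Your route is more direct and self-contained: you bypass the volume-entropy computation and the external citation entirely by exploiting the homogeneity of $\Hyp^n$. Translating a single compactly supported near-minimizer for $R^g$ by hyperbolic isometries into the region where $\lVert\beta\rVert_{g^*}$ is small, together with the pointwise bound $\lVert d_xf\rVert_{\sigma^F}^2\le (1-\eta^2)^{-1}\lVert d_xf\rVert_{g^*}^2$ (an immediate consequence of Lemma~\ref{lem:symbol_increase_precise_statement}), gives the upper bound in one stroke. The paper's approach has the virtue of illustrating the general machinery of \cite{BCCV} and the role of volume entropy, and would survive on spaces without a transitive isometry group; yours is shorter and makes the mechanism behind the collapse of the strict inequality (test functions escaping to where the perturbation vanishes) completely transparent, which is exactly the cautionary point the proposition is meant to convey.
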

\begin{proof}
 From Proposition \ref{prop:symbol_increase}, we directly obtain that $\lambda_1(F) \geq \lambda_1(g) = (n-1)^2/4$.
Now, since $\lVert \beta_x \rVert_{g^*}$ tends to zero as $x$ leaves every compact, the Finsler metric $F$ is \emph{asymptotically Riemannian} in the terminology of \cite{BCCV}. That is, for any $C>1$, there exists a compact $K$ such that, for any $x \in \Hyp^n \smallsetminus K$ and any $v \in T_x \Hyp^n$, 
\begin{equation*}
 C^{-1} \leq \frac{F(x,v)}{\sqrt{g(x,v)}} \leq C.
\end{equation*}

So, by \cite[Theorem 4.1]{BCCV}, we have $4\lambda_1(F) \leq (v^+(F))^2$. All there is left to do is to show that $v^+(F)$ corresponds to the volume entropy of the hyperbolic metric.

In the rest of the proof, we use $F$ as a subscript when referring to object defined by the distance $d_F$ for $F$ and $g$ when using the hyperbolic distance $d_g$.

Fix $x_0 \in \Hyp_n$ and $C>1$. Let $K$ be a compact such that, outside of $K$, the ratio between $F$ and $g_0$ is bounded by $C$. Since $K$ is compact, there exists $C_1$ (depending on $C$) and $C_2 \leq C_3$ (depending on $C_1$) such that $K \subset B_{g}(x_0, C_2) \subset B^+_F(x_0, C_1) \subset B_{g}(x_0, C_3)$ and $K \subset B_{g}(x_0, C_1)$, where $B^+_F(x_0, C_1)$ is the (forward) ball of radius $C_1$ for $F$ and $B_{g}(x_0, C_2)$ is the ball of radius $C_2$ for $g$.

Let $A^+_F(x_0, C_1, R)$ be the (forward) annulus of radii $C_1$ and $C_1 + R$ for $F$.
Let $y \in A^+_F(x_0, C_1, R)$.

Since $B_{g}(x_0, C_2) \subset B^+_F(x_0, C_1)$, we have that $d_g(x_0, y) \geq C_2$.
Now, Let $x_1$ be the point on $\partial B_F(x_0, C_1)$ such that $d_F(x_0,y) = d_F(x_0, x_1) + d_F(x_1,y)$. Since the geodesic between $x_1$ and $y$ is outside of the compact $K$, we have 
\begin{equation*}
 R +C_1 \geq d_F(x_0,y) = d_F(x_0, x_1) + d_F(x_1,y) \geq C_1 + \frac{1}{C} d_g(x_1, y).
\end{equation*}
So, using that $B^+_F(x_0, C_1) \subset B_{g}(x_0, C_3)$, we have
\begin{equation*}
 d_g(x_0,y) \leq d_g(x_0,x_1) + d_g(x_1,y) \leq C_3 + CR.
\end{equation*}

Hence, we showed that $A^+_F(x_0, C_1, R) \subset A_g(x_0, C_2, CR + C_3)$. Since we can also compute the volume entropy by taking the exponential growth of annuli instead of balls and that the volume for $F$ and for $g$ are the same, we obtain
\begin{align*}
 v^+(F)  &= \limsup_{R\rightarrow +\infty} \frac{1}{R} \log \vol(A^+_F(x_0, C_1,R)) \\
 &\leq \limsup_{R\rightarrow +\infty} \frac{1}{R} \log \vol(A_g(x_0, C_2,CR +C_3)) \\
 &\leq C (n-1).
\end{align*}

So $v^+(F) \leq C (n-1)$. Since this is true for any $C>1$, we obtain that $v^+(F)= n-1$, which proves that $4\lambda_1(F) = (n-1)^2$.
\end{proof}

\bibliographystyle{amsplain}
\bibliography{surprises}

\end{document}